\documentclass{cont}


\usepackage{amsfonts}
\usepackage{amsmath}
\usepackage{bbm}
\usepackage{mathrsfs} 
\usepackage{graphicx}
\usepackage{bm}


\newcommand{\fourier}[1]{\mathscr{F}\left( #1 \right)}


\equalenv{lem}{lemm}
\equalenv{example}{exam}
\equalenv{twier}{theo}
\equalenv{uwaga}{rema}
\newcommand{\eps}{\varepsilon}

\title
[On the continuity of Fourier multipliers]
{On the continuity of Fourier multipliers on the homogeneous Sobolev
spaces $\bm{\dot{W}^1_1(R^d)}$}


\author{\firstname{Krystian}  \lastname{Kazaniecki}}

\address{University of Warsaw\\ 
Institute of Mathematics\\
ul. Banacha 2\\
02-097 Warszawa,(Poland)}


\email{Krystian.Kazaniecki@mimuw.edu.pl}


\thanks{The research of the first author has been supported by the Polish Ministry
of Science grant no. N N201 397737 (years 2009-2012)}
\author{\firstname{Micha\l}  \lastname{Wojciechowski}}

\address{Polish Academy of Sciences\\ 
Institute of Mathematics\\
ul. \'Sniadeckich 8\\
00-956 Warszawa, (Poland)}


\email{M.Wojciechowski@impan.pl}


\thanks{The research of the second author has been supported by the NCN grant no. N N201 607840}


\keywords{Fourier multipliers, Sobolev spaces, Riesz product}
  

\subjclass{42B15, 43A22, 43A85}

\begin{document}
\begin{abstract}
  In this paper we proof that every Fourier multiplier on homogeneous Sobolev space $\dot{W}_1^1(\mathbb{R}^d)$ is a continuous function. This theorem is generalization of A. Bonami and S. Poornima result for Fourier multipliers, which are homogeneous functions of degree zero. 
\end{abstract}


\maketitle
\section{Introduction}
We consider the invariant operators on the homogeneous Sobolev spaces on $\mathbb R^d$ given by Fourier multipliers. The Sobolev space $\dot{W}^1_1(\mathbb R^d)$ consists of
those functions on $\mathbb R^d$ whose distributional derivatives of order one are
integrable. The pseudonorm, given by $\| \nabla f\|_1$, is a norm on the quotient by
constant functions (cf. \cite{MR879706}). A measurable function $m:\mathbb{R}^d\rightarrow\mathbb{R}$ is called a (Fourier) multiplier if the operator given by the formula
$T_mf=\mathscr{F}^{-1}(m\cdot \fourier{f})$ is bounded.
Fourier transforms of a bounded measures are examples of multipliers. Indeed,
the convolution with a bounded measure is a bounded operator on every translationaly invariant space where shifts operators are continuous, in particular on the homogeneous Sobolev space.\\

However, in this case the class of Fourier multipliers is wider than the class of Fourier transforms of measures (Proposition 2.2 in \cite{MR729352}). One of the most important questions about the invariant subspace of $L^1$
is how singular bounded operators acting on it may be. The class of invariant singular operators, which plays the most important role in analysis, are the Calderon - Zygmund operators which are given (in the invariant case) by the multipliers that are noncontinuous at 0. Therefore, the question of the continuity of a multiplier arises quite naturally in the theory.\\

The simplest case of noncontinuous multipliers was settled by A. Bonami and S. Poornima who proved that the only multipliers which are homogeneous functions of degree 0 are the constant functions. In their beautiful proof they use very delicate result by Ornstein (cf. \cite{MR0149331} ) on the non-majorization of a partial derivative by the other derivatives of the same order.
While the class of homogeneous multipliers, containing e.g. Riesz transforms, is the most important one, the question of the continuity of general multipliers remained open. The aim of this paper is to fill the gap. We prove that any multiplier acting on the homogeneous Sobolev space with integral norm is a continuous function.\\

Our proof uses three main ingredients. The first one is the Bonami - Poornima result.
The second is the Riesz product techniques which allows us to make the crucial estimates
on the torus group which would be sufficient for our purpose, provided we are able to transfer the problem from $\mathbb R^d$ to $\mathbb T^d$. This transference in the case of multipliers on $L^1$ space is the subject of the theorem of deLeeuw. However, in the case of multipliers on the homogeneous Sobolev space no equivalent of the deLeeuw transference theorem is known. We are able to overcome this difficulty
due to the special form of functions on which the multiplier reaches its norm.
The question of general deLeeuw type theorem for the homogeneous Sobolev spaces
remains open and we believe that this paper will provide a motivation for futher research in this direction.\\
For a formal statement of the main theorem we need some auxiliary definitions and notations. \small
\begin{itemize}
\item[$\cdot$] $L^p(\mathbb{R}^d)$ - space of Lebesgue $p$-integrable functions on $\mathbb{R}^d$
\item[$\cdot$] $\mathscr{D}(\mathbb{R}^d)$ - space of $C^{\infty}(\mathbb{R}^d)$ functions with compact support on $\mathbb{R}^d$.
\item[$\cdot$] $\mathscr{D}'(\mathbb{R}^d)$ - space of distributions on $\mathbb{R}^d$.  
\item[$\cdot$] $\mathscr{S}(\mathbb{R}^d)$ - Schwartz function space on $\mathbb{R}^d$. 
\item[$\cdot$] $\mathscr{S}'(\mathbb{R}^d)$ - space of tempered distributions on $\mathbb{R}^d$. 
\item[$\cdot$] $C_b(\mathbb{R}^d)$ - space of bounded continuous functions on $\mathbb{R}^d$. 
\item[$\cdot$] $\bm{G}$ - locally compact topological group.
\item[$\cdot$] $\bm{M}(\bm{G})$ - space of regular, bounded borel measures on $\bm{G}$.
\item[$\cdot$] $\fourier{\cdot}$ - Fourier transform of tempered distributions.
\item[$\cdot$] $\mathscr{F}^{-1}(\cdot)$ - inverse Fourier transform of tempered distributions. 
\end{itemize}
\normalsize
One can find more details on the function spaces mentioned above in \cite{MR1157815}. We define the Fourier transform as in \cite{MR0304972}.As
usual, $C$ will denote a generic constant, whose value can change from line to line.\\
We write $W^{k}_{p}(\mathbb{R}^d)$ for the Sobolev space on $\mathbb{R}^{d}$, given by
\begin{equation*}
W^{k}_{p}(\mathbb{R}^d)=\left\{f\in L^p(\mathbb{R}^d)\, :\, D^{\bm{\alpha}}f\in L^p(\mathbb{R}^d)\mbox{ for } |\bm{\alpha}|\leq k\right\}
\end{equation*}
with the norm
\begin{equation*}
\|f\|_{W^{p}_{k}(\mathbb{R}^d)}=\sum_{0\leq|\bm{\alpha}|\leq k}\|D^{\bm{\alpha}}f\|_{L^p(\mathbb{R}^d)}
\end{equation*}
where $\bm{\alpha}$ is a multi-index and $D^{\bm{\alpha}}$ is the corresponding distributional derivative
and $k\in\mathbb N^{+}$.
Analogously we write $\dot{W}^{k}_{p}(\mathbb{R}^d)$ for the homogeneous Sobolev space on $\mathbb{R}^{d}$, given by
\begin{equation*}
\dot{W}^{k}_{p}(\mathbb{R}^d)=\left\{f\in \mathscr{D}'(\mathbb{R}^d)\, :\, D^{\bm{\alpha}}f\in L^p(\mathbb{R}^d)\mbox{ for } |\bm{\alpha}|= k\right\}
\end{equation*}
with the seminorm
\begin{equation*}
\|f\|_{\dot{W}^{p}_{k}(\mathbb{R}^d)}=\sum_{|\bm{\alpha}|=k}\|D^{\bm{\alpha}}f\|_{L^p(\mathbb{R}^d)}
\end{equation*}
where $\bm{\alpha}$, $D^{\bm{\alpha}}$ and $k$ are the same as above. The homogeneous Sobolev spaces are the special cases of Beppo-Levy spaces which are discussed in \cite{MR0074787}. In the following part of the paper we will use the symbol $\dot{W}^{k}_{p}(\mathbb{R}^d)$ to denote quotient space $\dot{W}^{k}_{p}(\mathbb{R}^d)\slash \mathscr{P}^{k}$, where $\mathscr{P}^{k}$ stands for the space of polynomials of the degree strictly less then $k$. The space $\dot{W}^{k}_{p}(\mathbb{R}^d)\slash \mathscr{P}^{k}$ with quotient norm is a Banach space.
 We say that the function $m\in L^{\infty}(\mathbb{R}^d)$ is a Fourier multiplier on $X$, where $X$ is either the Lebesgue space, the Sobolev space or the homogeneous Sobolev space $\dot{W}_1^1(\mathbb{R}^d)$, if there exists a bounded operator $T : X\rightarrow X$ such that 
\begin{equation*}
\fourier{Tf}=m\fourier{f} \qquad \forall\,f\in\mathscr{S}(\mathbb{R}^d)
\end{equation*}   
We use the symbol $\mathscr{M}(X,X)$ to denote the space of Fourier multipliers on $X$.\\
Now we can state the main result of this paper \\
\begin{twier}\label{glownywynik}
If  $d\geq 2$ and $m(\cdot)\in\mathscr{M}(\dot{W}^1_1(\mathbb{R}^d),\dot{W}^1_1(\mathbb{R}^d))$ then $m(\cdot)\in C_b(\mathbb{R}^d)$.
\end{twier}
In the proof of the main theorem we will use the following theorem of A. Bonami and S. Poornima on the homogeneous Fourier multipliers on $\dot{W}^1_1(\mathbb{R}^d)$. 
\begin{twier}[A. Bonami, S. Poornima]\label{bonami}
Let $\Omega$ be a continuous function on $\mathbb{R}^d\backslash\{0\}$, homogeneous of degree zero i.e.
\begin{equation*}
\Omega(\eps\bm{x})=\Omega(\bm{x})\qquad \forall\,\bm{x}\in\mathbb{R}^d.
\end{equation*}
Then
\begin{equation*}
\Omega\in\mathscr{M}(\dot{W}^1_1(\mathbb{R}^d), \dot{W}^1_1(\mathbb{R}^d)) \Leftrightarrow \Omega\equiv K\in\mathbb{C}
\end{equation*}  
\end{twier}
For the proof see \cite{MR879706}. 
We will also use the classical theorem on pointwise convergence of multipliers on $L^1(\mathbb{R}^d)$ and deLeeuw theorems.
\begin{twier}\label{slaba*miary}
Let $\{\mu_\alpha\}_{\alpha\in A}$ be net of measures in $\bm{M}(\bm{G})$ such that $\|\mu_{\alpha}\|_{\bm{M}(\bm{G})}< M$ for all $\alpha\in A$ and $lim_{\alpha} \fourier{\mu_{\alpha}}(\chi)=\phi(\chi)$ exist for all $\chi\in X$. Suppose that $\phi$ is continuous. Then the limit function $\phi$ has the form $\fourier{\mu}$ for certain  $\mu\in\bm{M}(\bm{G})$ and $\|\mu\|_{\bm{M}(\bm{G})}$ does not exceed $M$.
\end{twier}
Proof of this theorem is in \cite{MR0262773} as Corollary 33.21.

\begin{twier}[deLeeuw]\label{deleeuwzplaszczyznydotorusa}
Let $1\leq p\leq\infty$, $m(\cdot)\in\mathscr{M}(L_p(\mathbb{R}^d),L_p(\mathbb{R}^d))$ and $m(\cdot)$ be continuous in points $\bm{n}\in\mathbb{Z}^d$. Define  
\begin{equation*}
\gamma_{\bm{n}}=m(\bm{n}).
\end{equation*}
Then $\{\gamma_{\bm{n}}\}\in\mathscr{M}(L_p(\mathbb{T}^d),L_p(\mathbb{T}^d))$ and the following inequality holds
\begin{equation*}
\|T_{\gamma}\|\leq \|T_{m}\|
\end{equation*}
\end{twier}
\begin{twier}[deLeeuw]\label{deleeuwztorusadoplaszczyzny}
Let $m(\cdot)$ be continuous function on $\mathbb{R}^d$. Define for every $\eps>0$  
\begin{equation*}
\gamma(\eps)_{\bm{n}}:=m(\eps\bm{n}) \qquad\forall\,\bm{n}\in\mathbb{Z}^d
\end{equation*}
If
\begin{equation*}
\{\gamma(\eps)_{\bm{n}}\}_{\bm{n}\in\mathbb{Z}^d}\in\mathscr{M}(L_p(\mathbb{T}^d),L_p(\mathbb{T}^d))
\end{equation*} 
and
\begin{equation*}
\|T_{\gamma(\eps)}\|\leq C\qquad\forall\;\eps>0
\end{equation*}
Then $m(\cdot)\in\mathscr{M}(L_p(\mathbb{R}^d),L_p(\mathbb{R}^d))$ and
\begin{equation*}
\|T_m\|\leq \sup_{\eps>0}\|T_{\gamma(\eps)}\|
\end{equation*}
\end{twier}
Proof of these two theorems are in \cite{MR0304972}.
\begin{uwaga}
In Theorem \ref{deleeuwztorusadoplaszczyzny} it is sufficient to take convergent to zero a sequence $\{\eps_j\}$ instead of every $\eps>0$.
\end{uwaga}

In the next section we prove the main result. To focus the attention on the main line of the proof, some technical lemmas are formulated in that section without proofs. 
For the reader's convenience proofs of the technical lemmas are given in the last section.

\section{Proof of main theorem}
In this section we prove the main result of this paper (Theorem \ref{glownywynik}). It is obvious that $m(\cdot)\in\mathscr{M}(\dot{W}^1_1(\mathbb{R}^d),\dot{W}^1_1(\mathbb{R}^d))$  is continuous on $\mathbb{R}^d\backslash\{0\}$. Then it is enough to show that 
 $\lim_{\bm{x}\rightarrow 0} m(\bm{x})$ exists.\\ Prior to proof of the theorem we need one more definition.
Let $f: \mathbb{R}^d\to\mathbb{R}$. We say that $f$ has \textit{almost radial limits} at 0 iff the following condition (*) holds:
\begin{description}
\item[(*)] whenever two sequences $\{t_k\bm{v}^k\}$, $\{s_k\bm{w}^k\}$ ($t_k\in\mathbb{R}$, $s_k\in\mathbb{R}$, $\bm{v}^k\in\mathbb{S}^{d-1}$,$\bm{w}^k\in\mathbb{S}^{d-1}$)
satisfy
\begin{equation*}
\begin{split}
\lim_{k\rightarrow \infty }f(t_k\bm{v}^k)=a&\neq b=\lim_{k\rightarrow\infty}f(s_k\bm{w}^k),\\
\lim_{k\rightarrow \infty } t_k&=\lim_{k\rightarrow \infty } s_k=0,\\
\end{split}
\end{equation*}
then $\lim_{k\rightarrow \infty }|\bm{v}^k- \bm{w}^k| > 0$.\\
\end{description}
\begin{proof}{ Theorem \ref{glownywynik}}

Since $m$ is bounded, there are three possibilities:
\begin{enumerate}
\item[I] $m(\cdot)$ satisfies condition (*).
\item[II] Condition (*) does not hold. Then there exists a sequence $\{\bm{a}^{n}\}_{n\in \mathbb{N}}\subset\mathbb{R}^d$, $\bm{a}^{n}\to 0$,  a vector $v\in \mathbb{S}^1$, two different scalars $a$ and $b$ such that 
\[\lim_{n\rightarrow \infty} \frac{\bm{a}^{n}}{|\bm{a}^{n}|} = v\]
and one of the following is satisfied\\
\begin{enumerate}
\item Symmetric case.
\begin{equation}\label{ciagizcaseIIa}
\begin{aligned}
\lim_{n\rightarrow \infty} &m(\bm{a}^{2n} )&=\lim_{n\rightarrow \infty} &m(-\bm{a}^{2n} )&=a\\ \lim_{n\rightarrow \infty} &m(\bm{a}^{2n+1} )&=\lim_{n\rightarrow \infty} &m(-\bm{a}^{2n+1})&=b.
\end{aligned}
\end{equation}
\item Asymmetric case.
\begin{equation*}
\begin{aligned}
&\lim_{n\rightarrow \infty} m(\bm{a}^n )&=a\\& \lim_{n\rightarrow \infty}m(-\bm{a}^n )&=b.
\end{aligned}
\end{equation*}
\end{enumerate}
\end{enumerate}
\subsection{Proof in the case I}
To prove the continuity in this case we need the following lemma on the pointwise convergence of multipliers. 
\begin{lem}\label{punktowazbieznoscmnoznikow}
Let $\{m_k(\cdot)\}$ be a sequence of Fourier multipliers on 
 $\dot{W}^1_1(\mathbb{R}^d)$ and assume that the corresponding operators have commonly bounded norms. If $m_k(\cdot)$ converge pointwise to $m(\cdot)$ and $m(\bm{\xi})\xi_j$ are continuous functions for $\mbox{$j\in\{1,2,\ldots,d\}$}$, then   $m(\cdot)\in\mathscr{M}(\dot{W}^1_1(\mathbb{R}^d),\dot{W}^1_1(\mathbb{R}^d))$.
\end{lem}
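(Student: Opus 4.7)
The plan is to construct $T_m$ by acting on Schwartz functions on the Fourier side, apply dominated convergence to transfer the pointwise convergence $m_k \to m$ into pointwise (in fact uniform) convergence of $\partial_j T_{m_k} f$, and then invoke Fatou's lemma to transport the uniform operator-norm bound on the $T_{m_k}$ to the limit.

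First I would record that the uniform bound on the operator norms $\|T_{m_k}\|_{\dot{W}^1_1\to\dot{W}^1_1}\leq C$ forces $\|m_k\|_\infty$ to be uniformly bounded by some constant $C'$ (by a standard testing argument on oscillating Schwartz functions), and therefore $\|m\|_\infty\leq C'$ as well. For $f\in\mathscr{S}(\mathbb{R}^d)$ I set $T_{m_k}f:=\mathscr{F}^{-1}(m_k\hat{f})$ and $T_mf:=\mathscr{F}^{-1}(m\hat{f})$; since $\hat{f}$ is Schwartz and the symbols are bounded, these are well-defined bounded continuous functions whose weak derivatives satisfy
\begin{equation*}
\partial_j T_{m_k}f = \mathscr{F}^{-1}\bigl(i\xi_j m_k(\xi)\hat{f}(\xi)\bigr), \qquad \partial_j T_mf = \mathscr{F}^{-1}\bigl(i\xi_j m(\xi)\hat{f}(\xi)\bigr).
\end{equation*}
The integrands on the Fourier side are dominated by $C'|\xi_j \hat{f}(\xi)|\in L^1(\mathbb{R}^d)$, so by the dominated convergence theorem $i\xi_j m_k\hat{f}\to i\xi_j m\hat{f}$ in $L^1(\mathbb{R}^d)$, and consequently $\partial_j T_{m_k}f \to \partial_j T_mf$ uniformly on $\mathbb{R}^d$. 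The hypothesis that $m(\xi)\xi_j$ is continuous enters here to guarantee that the limiting Fourier symbol is an unambiguously defined function and the inverse transform gives a canonical representative.

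Second, uniform convergence is in particular pointwise convergence, so Fatou's lemma yields
\begin{equation*}
\|\partial_j T_mf\|_{L^1}\leq\liminf_{k\to\infty}\|\partial_j T_{m_k}f\|_{L^1}\leq\liminf_{k\to\infty}\|T_{m_k}\|\cdot\|f\|_{\dot{W}^1_1}\leq C\|f\|_{\dot{W}^1_1}.
\end{equation*}
Summing over $j$ gives $\|T_mf\|_{\dot{W}^1_1}\leq dC\|f\|_{\dot{W}^1_1}$ for every Schwartz $f$. Since $\mathscr{S}(\mathbb{R}^d)$ is dense in $\dot{W}^1_1(\mathbb{R}^d)$ (via mollification and truncation, modulo the polynomial class that has been quotiented out), $T_m$ extends by continuity to a bounded operator on $\dot{W}^1_1$, whose Fourier symbol is manifestly $m$. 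This shows $m\in\mathscr{M}(\dot{W}^1_1,\dot{W}^1_1)$.

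The main obstacle is that $L^1$-norm convergence is not automatic from pointwise convergence of the integrands; there is no way to upgrade the pointwise convergence $\partial_j T_{m_k}f\to\partial_j T_mf$ to $L^1$-convergence in general, because bounded sequences in $L^1$ can concentrate or escape to infinity. Fatou's lemma provides the cleanest route, giving precisely the one-sided inequality needed to bound the limit operator. The extension step at the end is standard but deserves care because $\dot{W}^1_1$ is defined modulo polynomials and the approximating Schwartz sequence must be chosen so that its gradient converges in $L^1$ rather than the functions themselves converging.
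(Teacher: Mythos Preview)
Your proof is correct and takes a more direct route than the paper's. The paper argues via weak-$*$ compactness: for fixed Schwartz $f$, the sequence $\partial_j T_{m_k}f$ is bounded in $L^1(\mathbb{R}^d)\subset\bm{M}(\mathbb{R}^d)$, so a subsequence converges weak-$*$ to a measure $\mu_j$; the pointwise convergence of the Fourier transforms together with the continuity of $m(\bm{\xi})\xi_j$ lets one invoke a Bochner-type result (the paper's Theorem~\ref{slaba*miary}) to identify $\widehat{\mu_j}(\bm{\xi})=m(\bm{\xi})\xi_j\widehat{f}(\bm{\xi})$ and bound $\|\mu_j\|_{\bm{M}}$; an additional $L^2$ argument then shows $\mu_j$ is absolutely continuous, hence in $L^1$. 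You instead exploit that for Schwartz $f$ the integrands $i\xi_j m_k\widehat{f}$ are dominated in $L^1$, so dominated convergence gives uniform convergence of $\partial_j T_{m_k}f$ to $\partial_j T_m f$ on the physical side, and Fatou's lemma then transfers the uniform $L^1$ bound directly. This avoids both the compactness argument and the $L^2$ step, and in fact makes no essential use of the continuity hypothesis on $m(\bm{\xi})\xi_j$: your remark about where that hypothesis enters is not quite accurate, since the pointwise limit $m$ is already a well-defined measurable function once $m_k\to m$ pointwise, but this does not affect correctness. The paper's route is more in the spirit of classical measure-theoretic multiplier theory; yours is cleaner for this particular lemma.
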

The prove of this lemma one can find in the Appendix.
In the next lemma we use Theorem \ref{bonami} to show that the multipliers satisfying condition (*) are continuous.

\begin{lem}\label{gwiazdkalemat}
If $d\geq 2$ and $m(\cdot)\in\mathscr{M}(\dot{W}_1^1(\mathbb{R}^d),\dot{W}_1^1(\mathbb{R}^d))$ satisfies condition  (*), then 
$\lim_{\bm{\xi}\rightarrow 0} m(\bm{\xi})$ exists and is finite.
\end{lem}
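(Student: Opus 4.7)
The plan is to use condition~(*) to construct, out of the radial cluster values of $m$ at the origin, a continuous function $\Omega$ on $\mathbb{R}^d\setminus\{0\}$ that is homogeneous of degree zero; then to show that $\Omega$ is itself a Fourier multiplier on $\dot{W}^1_1(\mathbb{R}^d)$; and finally to invoke Theorem~\ref{bonami} to force $\Omega$ to be constant, which will imply that $\lim_{\bm{\xi}\to 0} m(\bm{\xi})$ exists.

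First, for each $\bm{v}\in\mathbb{S}^{d-1}$ I would define $\Omega(\bm{v})$ as the common value of all subsequential limits of $m(t_k\bm{v}^k)$ taken along sequences with $t_k\to 0$ and $\bm{v}^k\to\bm{v}$. Boundedness of $m$ guarantees at least one such cluster value, and condition~(*) in its contrapositive form (the case $|\bm{v}^k-\bm{w}^k|\to 0$) forces any two such cluster values to coincide, so $\Omega(\bm{v})$ is well-defined; the same argument shows that the whole sequence $m(t_k\bm{v}^k)$ converges to $\Omega(\bm{v})$. Continuity of $\Omega$ on $\mathbb{S}^{d-1}$ then follows by a diagonal argument: given $\bm{v}_n\to\bm{v}$, pick $t_n\downarrow 0$ with $|m(t_n\bm{v}_n)-\Omega(\bm{v}_n)|<1/n$ and notice that $m(t_n\bm{v}_n)\to\Omega(\bm{v})$. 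Extending $\Omega$ to $\mathbb{R}^d\setminus\{0\}$ by homogeneity of degree zero, I note that $\Omega(\bm{\xi})\xi_j$ is continuous on all of $\mathbb{R}^d$, with value $0$ at the origin because $\Omega$ is bounded.

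To prove that $\Omega$ is a multiplier on $\dot{W}^1_1(\mathbb{R}^d)$, I would apply Lemma~\ref{punktowazbieznoscmnoznikow} to the approximating sequence
\begin{equation*}
m_k(\bm{\xi}):=m(\lambda_k^{-1}\bm{\xi}),\qquad \lambda_k\to\infty.
\end{equation*}
A short change-of-variables computation shows that the multiplier norm on $\dot{W}^1_1(\mathbb{R}^d)$ is dilation invariant, so $\|T_{m_k}\|=\|T_m\|$ for every $k$. For any fixed $\bm{\xi}\neq 0$, the points $\lambda_k^{-1}\bm{\xi}$ tend to $0$ along the constant direction $\bm{\xi}/|\bm{\xi}|$, so by the defining property of $\Omega$ one has $m_k(\bm{\xi})\to\Omega(\bm{\xi}/|\bm{\xi}|)=\Omega(\bm{\xi})$. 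With $\Omega(\bm{\xi})\xi_j$ already continuous, Lemma~\ref{punktowazbieznoscmnoznikow} gives $\Omega\in\mathscr{M}(\dot{W}^1_1(\mathbb{R}^d),\dot{W}^1_1(\mathbb{R}^d))$, and Theorem~\ref{bonami} then forces $\Omega\equiv K$ for some constant $K\in\mathbb{C}$.

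To conclude, if $m(\bm{\xi})$ did not converge to $K$ as $\bm{\xi}\to 0$, one could extract a sequence $\bm{\xi}_n\to 0$ with $m(\bm{\xi}_n)\to c\neq K$ and, after passing to a subsequence, with $\bm{\xi}_n/|\bm{\xi}_n|\to\bm{v}\in\mathbb{S}^{d-1}$; by construction $c=\Omega(\bm{v})=K$, a contradiction, so $\lim_{\bm{\xi}\to 0}m(\bm{\xi})=K$. The step I expect to require the most care is the well-definedness and continuity of $\Omega$, which rests entirely on the subsequence-extraction argument driven by~(*); the approximation by dilates must then be set up so that $\Omega(\bm{\xi})\xi_j$ is genuinely continuous at the origin, since otherwise Lemma~\ref{punktowazbieznoscmnoznikow} could not be invoked.
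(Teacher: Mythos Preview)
Your proposal is correct and follows essentially the same route as the paper: define $\Omega$ from the radial limits of $m$ at $0$, use condition~(*) to check it is well-defined and continuous on $\mathbb{R}^d\setminus\{0\}$, apply Lemma~\ref{punktowazbieznoscmnoznikow} to the dilates $m(\lambda_k^{-1}\cdot)$ (whose multiplier norms are equal by scale invariance) to see that $\Omega$ is a multiplier, and then invoke Theorem~\ref{bonami} to force $\Omega\equiv K$ and conclude. You are slightly more explicit than the paper in defining $\Omega(\bm v)$ as the common cluster value along all sequences $t_k\bm v^k$ with $\bm v^k\to\bm v$ and in noting that $\Omega(\bm\xi)\xi_j$ extends continuously through the origin, but these are just the details the paper leaves to the reader.
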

\begin{proof}
Note first that $m$ has the radial limit at 0 (we apply (*) to fixed $\bm{v}=\bm{v}^k=\bm{w}^k$). 
Hence the formula
\begin{equation*}
\Omega(\bm{\xi})=\lim_{n\rightarrow \infty} m(\frac{1}{n}\bm{\xi}).
\end{equation*}
defines a homogeneous function on $\mathbb{R}^d\backslash\{0\}$. One can easily check that due to (*) condition $\Omega$ has to be a continuous function on $\mathbb{R}^d\backslash\{0\}$. 
Since the norm of multipliers from $\mathscr{M}(\dot{W}^1_1(\mathbb{R}^d),\dot{W}^1_1(\mathbb{R}^d))$ is invariant under rescaling, the functions $m(\frac{1}{n}\cdot)$ 
are Fourier multipliers with equal norms.
By Lemma \ref{punktowazbieznoscmnoznikow} their pointwise limit, being bounded and continuous on $\mathbb{R}^d\backslash\{0\}$, is a Fourier multiplier.
 Then Theorem \ref{bonami} implies that $\Omega$ is a constant function which in turn means that all radial limits of $m$ are equal. Since the multiplier $m$ satisfies (*) it follows that $m$ is a continuous function. 
\end{proof}

\subsection{Proof in the case IIa}
From now on we assume that $d=2$. This allows us to
simplify the notation yet not loosing the generality. We can also assume, transforming linearly if necessary, that $a=1$, $b=-1$  and $\bm{v}=(1,0)$.
In the proof we will use the following lemma to get estimates on the norm of the multiplier $m$.
\begin{lem}(cf. \cite{MR1649869})\label{lematpolostateczny} 
There is $C>0$ such that for every $n\in\mathbb{N}^{+}$ there exists $M=M(n)$ and a sequence $\{\sigma_j\}_{j=1}^{n}\in \{0,1\}^{n}$ such that
\begin{equation}\label{znaki}
\left\|\sum_{j=1}^{s} \sigma_j\cos\left(2\pi \langle\bm{d}^j,\,\bm{\xi}\rangle\right)\prod_{1\leq k< j}\left(1+\cos\left(2\pi\langle \bm{d}^{k},\,\bm{\xi}\rangle\right)\right)\right\|_{L_1(\mathbb{T}^d)}\geq C s
\end{equation}
 whenever $\{\bm{d}^k\}_{k=1}^{s}\subset\mathbb{Z}^d$ satisfies
\begin{equation*}
|\bm{d}^{k+1}|> M |\bm{d}^{k}|.
\end{equation*} 
\end{lem}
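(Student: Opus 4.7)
The starting point is the telescoping identity: writing
\[
R_j(\bm{\xi}) := \prod_{k=1}^{j}\bigl(1+\cos(2\pi\langle \bm{d}^k, \bm{\xi}\rangle)\bigr), \qquad R_0 \equiv 1, \qquad T_j := R_j - R_{j-1},
\]
each summand appearing on the left-hand side of \eqref{znaki} equals $T_j(\bm{\xi})$. For $M$ large, lacunarity forces all the sub-sum frequencies $\sum \epsilon_k \bm{d}^k$ ($\epsilon_k \in \{-1,0,1\}$) to be pairwise distinct, whence $R_j \geq 0$, $\|R_j\|_{L^1(\mathbb{T}^d)} = 1$, the $T_j$ have pairwise disjoint Fourier supports, and a direct computation using the Fourier expansion of $|\cos|$ gives $\|T_j\|_{L^1(\mathbb{T}^d)} = 2/\pi$. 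Crucially, $\sum_{j=1}^n T_j = R_n - 1$ has $L^1$-norm at most $2$, so the naive choice $\sigma_j \equiv 1$ is useless: the signs must be arranged so as to destroy the telescoping.

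The plan is to select the $\sigma_j$'s by a randomization argument. Taking independent $\pm 1$ Rademacher variables $\varepsilon_j$ and setting $\sigma_j := (1+\varepsilon_j)/2 \in \{0,1\}$, one splits
\[
\sum_{j=1}^n \sigma_j T_j \;=\; \tfrac{1}{2}(R_n - 1) \;+\; \tfrac{1}{2}\sum_{j=1}^n \varepsilon_j T_j,
\]
so by the triangle inequality and $\|R_n - 1\|_{L^1} \leq 2$ the lemma is reduced to showing
\[
\mathbb{E}_\varepsilon \Bigl\|\sum_{j=1}^n \varepsilon_j T_j\Bigr\|_{L^1(\mathbb{T}^d)} \;\geq\; c_1 n,
\]
at which point a single realization of $\sigma$ yields a particular $\sigma^\ast \in \{0,1\}^n$ witnessing \eqref{znaki}. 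To make $\sigma^\ast$ depend only on $n$ (not on $\{\bm{d}^k\}$), one exploits that the $L^1$-norm on $\mathbb{T}^d$ is, for each fixed trigonometric polynomial structure, a continuous function of the lacunarity ratios; a compactness/quantitative-equidistribution argument then allows one to fix $\sigma^\ast$ once and for all and to absorb the residual error into the threshold $M = M(n)$.

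The main obstacle is the linear-in-$n$ lower bound for $\mathbb{E}_\varepsilon \|\sum_j \varepsilon_j T_j\|_{L^1(\mathbb{T}^d)}$. The natural first reduction is pointwise Khintchine, which converts the problem into the square-function estimate
\[
\int_{\mathbb{T}^d} \Bigl(\sum_{j=1}^n T_j(\bm{\xi})^2\Bigr)^{1/2}\, d\bm{\xi} \;\geq\; c_2 n,
\]
but this inequality is delicate: it is not implied by any direct $L^p$-inequality and requires the specific arithmetic structure of the lacunary Riesz product, in particular the positivity of the factors $R_{j-1} \geq 0$ combined with the disjointness and explicit decay of the Fourier coefficients $\varepsilon_{\max S}\cdot 2^{-|S|}$ of $\sum \varepsilon_j T_j$ at frequencies $\sum_{k \in S}\epsilon_k \bm{d}^k$. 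This is the principal technical contribution of \cite{MR1649869} and the point where the requirement $M = M(n)$ enters; once it is in hand, the remainder of the argument (randomization plus triangle inequality) is routine.
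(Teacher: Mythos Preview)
Both your sketch and the paper's treatment reduce the lemma to \cite{MR1649869}, but they differ in how the crucial uniformity of $\sigma$ in the lacunary sequence $\{\bm{d}^k\}$ is obtained. The paper first invokes a transfer result of Meyer (\cite{MR0240563}, Theorem~5): whenever $\sum_k |\bm{d}^k|/|\bm{d}^{k+1}|$ is bounded, the $L^1$-norm in \eqref{znaki} is equivalent (with constants depending only on $n$) to the expectation of the same expression with the functions $\cos(2\pi\langle\bm{d}^j,\cdot\rangle)$ replaced by cosines of \emph{independent Steinhaus variables}. After this replacement the problem is purely probabilistic and no longer involves $\{\bm{d}^k\}$ at all; Lemma~1 of \cite{MR1649869} then furnishes $\sigma$ directly, and the uniformity is automatic. (Lata{\l}a's refinement even pins down $\sigma_j=(-1)^j$.)

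You instead stay on $\mathbb{T}^d$, Rademacher-randomize, and reduce via pointwise Khintchine to the square-function bound $\int(\sum_j T_j^2)^{1/2}\geq c\,n$; then you appeal to a ``compactness/quantitative-equidistribution argument'' to make $\sigma^\ast$ independent of $\{\bm{d}^k\}$. This last step is where the real content lies, and as stated it is a gap: the $L^1$-norm is \emph{not} a function of the lacunarity ratios alone (the directions of the $\bm{d}^k\in\mathbb{Z}^d$ matter and range over a non-compact set), so there is no evident compact parameter space over which to run a continuity argument. What one actually needs is that, as $M\to\infty$, the $L^1$-norms converge \emph{uniformly over all admissible $\{\bm{d}^k\}$} to the Steinhaus expectation---and making this precise is exactly Meyer's transfer theorem. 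So your argument, once completed, coincides with the paper's; but the paper's ordering (transfer first, then cite the probabilistic lemma) isolates this issue cleanly at the outset, whereas your sketch defers it to the end and leaves it unproved.
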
 
\begin{uwaga}
The precise value of $M_s$ follows from Theorem 5 on page 563 from \cite{MR0240563} which says that
whenever $\sum_{k=1}^s \big(\frac{|d_k|}{|d_{k+1}|}\big)<K$ then the expression 
appearing in the lemma is equivalent to the similar one with functions 
$\bm{\xi}\mapsto\cos(2\pi\langle\bm{d}^j,\,\bm{\xi}\rangle)$ replaced by
cosines of independent Steinhaus variables. And after this replacement the lemma
is just Lemma 1 of \cite{MR1649869}. Similar, but weaker conditions for this equivalence was found by M. Dechamps \cite{MR641858}. 
\end{uwaga}
The improvement by R. Lata{\l}a in \cite{info} allows us to fix $\sigma_j=(-1)^{j}$ in \eqref{znaki}.
In the rest of the paper we put $N= \left(|\frac{\log(M_s)}{\log(2)}|+2\right)$ 

Let us assume that operator $T_m$ corresponding to multiplier $m$ is bounded. For every $s\in \mathbb{N}$ we will construct function $h_s$, such that
\begin{equation*}
 \|T_m h_s\|_{\dot{W}^{1}_{1}(\mathbb{R}^d)}\geq Cs
\end{equation*}
To do this we fix $\eps>0$, which will be determined later and we construct a sequence of balls
$\textbf{B}(\bm{c}^k,r_k)$ and $\textbf{B}(-\bm{c}^k,r_k)$ where $k\in\{1,2,\ldots, s\}$, such that the following conditions hold:
\begin{enumerate}\label{indukcjaIIa}
\item[A.] $|m(\bm{\xi})-(-1)^k| <\eps$ for $\bm{\xi}\in\bm{B}(\bm{c}^{k},r_{k})\cup\bm{B}(-\bm{c}^{k},r_{k})$ for $k=1,2,\ldots, s$,\vspace{0.4em}
\item[B.] $r_{n}\leq 2^{-N}r_{n+1}$ for $n=1,2,\ldots,s-1$,\vspace{0.4em}
\item[C.] $\bm{c}^{n}\in\mathbb{Q}\times\mathbb{Q}$ for $n=1,2,\ldots,s$,\vspace{0.4em}
\item[D.] $|\bm{c}^{n+1}|<2^{-N}r_n$ for $n=1,2,\ldots,s-1$,\vspace{0.4em}
\item[E.] $|c^n_2|\slash |c^n_1|\leq \frac{1}{3^{s+2}s}$ for $n=1,2,\ldots,s$,\vspace{0.4em}
\item[F.] $|\bm{c}^{n}|<2^{-N}|\bm{c}^{n+1}|$ for $n=1,2,\ldots,s-1$,\vspace{0.4em}
\item[G.] $\{0\}\times\mathbb{R}\cup\mathbb{R}\times \{0\}\notin\bm{B}(\bm{c}^{n},r_{n})\cup\bm{B}(-\bm{c}^{n},r_{n})$ for $n=1,2,\ldots,s$,\vspace{0.4em}
\item[H.] $|c^{n}_i|< 2^{-N} |c^{n+1}_{i}|$ for $n=1,2,\ldots,s$ and $i\in\{1,2\}$.\vspace{0.4em}
\item[I.] \small$\forall_{k\in\{1,\ldots,s\}}\bm{B}(\sum_{j=1}^{k}\zeta_j \bm{c}^j,r_s)  \subset \bm{B}(\zeta_k \bm{c}^k,r_k)$ for $\zeta_k\in\{-1,1\}$ and $\zeta_j\in\{-1,0,1\}.$\vspace{0.4em}
\end{enumerate}
\normalfont
We define sequences $\{\bm{c}^k\}$ and $\{r_k\}$ by backward induction. There is no problem with 
$r_n$ because it is chosen always after $\bm{c}^n$ and by B and G it just need to be
sufficiently small. For $\bm{c}^n$ note that the conditions D and F require only that $\bm{c}^n$ is small enough. For condition A it is enough to take $\bm{c}^n$ as an element of
sequence $\{\bm{a}^{2 k+ (n\; mod\; 2)}\}$ (\ref{ciagizcaseIIa}) and if we additionally select sufficiently big $k$, the conditions E and H are satisfied. At the end we adjust our choice to the condition C: since the rationals are dense in $\mathbb{R}$ and all other inequalities are strict, we can do this in such way that inequalities remain valid. 
\\
The condition I follows from B, D and F. Indeed for $\mbox{$k\in\{1,\ldots,s-1\}$}$,$\zeta_j\in\{-1,0,1\}$, $j\in\{1,...,k-1\}$ and $\zeta_k\in\{-1,1\}$ we have
\begin{equation*}
\sum_{j=1}^{k-1} r_j<2^{-N}\sum_{j=2}^{k-1} r_j+2^{-N}r_{k} <\ldots<\left(\sum_{j=1}^{k}2^{-Nj}\right)r_k<\frac{1}{2}r_k.
\end{equation*}
 Hence
\begin{equation}\label{oszacowaniepromieni}
\begin{split}
|\zeta_k \bm{c}^k-\sum_{j=1}^{k}\zeta_j \bm{c}^j|=|\sum_{j=1}^{k}\zeta_j \bm{c}^j|<\sum_{j=1}^{k-1}|\zeta_j \bm{c}^j|<\sum_{j=1}^{k-1}2^{-N}r_j<\frac{1}{2}r_k.
\end{split}
\end{equation} 
By condition B $r_l<\frac{r_k}{4}$ for $k>l$, by (\ref{oszacowaniepromieni}),
\begin{equation*}
\begin{split}
&\bm{B}(\sum_{j=1}^{k}\zeta_j \bm{c}^j,r_1)  \subset \bm{B}(\zeta_k \bm{c}^k,r_k)
\qquad\forall k\in\{1,2,\ldots,s\}, 
\end{split}
\end{equation*}
\\
Norm of operator $T_m$ is invariant under rescaling and condition $C$ holds. Hence for fixed $s$ multiplying $\bm{c}_j$'s by suitable scalar and rescaling multiplier $m$ by the same scalar, we may assume that $\bm{c}^1, \ldots, \bm{c}^s \in \mathbb{Z}^2$ and conditions A-I are still satisfied.
Note that if $\bm{q}\in \mathbb{Z}^2$ has the representation  
\begin{equation}\label{repr}
\bm{q}=\sum_{i=1}^{s} \zeta_j(\bm{q}) \bm{c}^j\quad\mbox{ where }\zeta_i(\bm{\phi})\in\{-1,0,1\}
\end{equation}  
it is unique. For $\bm{q}\in\mathbb{Q}^2$ we denote by $\chi(\bm{q})$ the number of non zero
summands in the representation (\ref{repr}).We define set
\begin{equation}\label{zbioreklambdas}
 \Lambda_s=\{\bm{q} : \bm{q}=\sum_{i=1}^{s} \zeta_j(\bm{q}) \bm{c}^j; \bm{q}\neq 0\;\mbox{ where }\zeta_i(\bm{\phi})\in\{-1,0,1\}\}
\end{equation}
If $\bm{q}$, $\tilde{\bm{q}} \in\Lambda_s$ are two different vectors then
\begin{equation}\label{odleglosccjwiekszaniz1}
|\bm{q}-\tilde{\bm{q}}|\geq \inf|\bm{c}^j|\geq 1.
\end{equation}
\\
We will construct function $h_s$ in such a way that one of it's derivatives acts like a Riesz product. 
Let
\begin{equation*}
g(t)=\max\{1-|t|,0\}^2
\end{equation*}
and
\begin{equation*}
G(\bm{\xi})=g(\xi_1)g(\xi_2) 
\end{equation*}
For given $\theta\in\mathbb{N}^{+}$ we set
\begin{equation}\label{postacfunkcjifns}
H^{\theta}(\xi)=\sum_{\bm{q}\in\Lambda_s} \frac{1}{2^{\chi(\bm{q})}}G\left(2^{\theta}(\bm{\xi}-\bm{q})\right),
\end{equation}

\begin{theorem}{(G. Gaudry, A. Fig{\`a}-Talamanca cf. \cite{MR0276689})}\label{gaudry}\\
Let $\phi$ be a function on $\mathbb{Z}^d$ and $\phi\in\mathscr{M}(L^p(\mathbb{T}^d),L^p(\mathbb{T}^d))$. Let
\begin{equation*}
 W(\phi)(\bm{\xi}) = \sum_{\bm{n}\in\mathbb{Z}^d} G(\bm{n}-\bm{\xi})\phi(\bm{n}),
\end{equation*}
Then the function $W(\phi)\in\mathscr{M}(L^p(\mathbb{R}^d),L^p(\mathbb{R}^d))$, with a norm no greater than the norm of $\phi$.
\end{theorem}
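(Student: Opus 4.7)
The plan is to transfer the boundedness of $T_\phi$ from $\mathbb{T}^d$ to $\mathbb{R}^d$ by lifting $f$ to a function on $\mathbb{R}^d\times\mathbb{T}^d$ via the window $G$, applying $T_\phi$ fiberwise in the torus variable, and recovering $T_{W(\phi)}f$ from the result. The construction rests on two properties of $G$: since $g(t)=\max\{1-|t|,0\}^2$ satisfies $g(n)=\delta_{n,0}$ on $\mathbb{Z}$, Poisson summation applied coordinatewise yields $\sum_{\bm{k}\in\mathbb{Z}^d}\widehat{G}(\bm{y}+\bm{k})=1$ for every $\bm{y}\in\mathbb{R}^d$; moreover $\widehat{g}=\mathrm{sinc}^2\ast\mathrm{sinc}^2\ge 0$, so $\widehat{G}\ge 0$. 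Hence $\{\widehat{G}(\bm{y}+\bm{k})\}_{\bm{k}\in\mathbb{Z}^d}$ is a probability measure on $\mathbb{Z}^d$ for every $\bm{y}$.

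For Schwartz $f$ I would introduce the $\mathbb{Z}^d$-periodic function
\[
F_{\bm{x}}(\bm{y}):=\sum_{\bm{k}\in\mathbb{Z}^d}f(\bm{x}+\bm{y}+\bm{k})\widehat{G}(\bm{y}+\bm{k}),
\]
whose Fourier coefficients on $\mathbb{T}^d$, obtained by unfolding the $[0,1]^d$-integral to an $\mathbb{R}^d$-integral, are $\widehat{F_{\bm{x}}}(\bm{n})=\int_{\mathbb{R}^d}e^{2\pi i\bm{x}\cdot\bm{\xi}}\widehat{f}(\bm{\xi})G(\bm{n}-\bm{\xi})\,d\bm{\xi}$. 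Summing these against $\phi(\bm{n})$ and interchanging sum and integral produces the key identity
\[
T_{W(\phi)}f(\bm{x})=(T_{\phi}F_{\bm{x}})(0).
\]
Jensen's inequality against the probability weights $\widehat{G}(\bm{y}+\bm{k})$ bounds $|F_{\bm{x}}(\bm{y})|^p\le\sum_{\bm{k}}|f(\bm{x}+\bm{y}+\bm{k})|^p\widehat{G}(\bm{y}+\bm{k})$, and integration over $(\bm{x},\bm{y})\in\mathbb{R}^d\times[0,1]^d$ collapses the right side to $\|f\|_p^p$ by $\sum_{\bm{k}}\widehat{G}(\bm{y}+\bm{k})=1$, so the lifting $f\mapsto F$ is an $L^p$-contraction into $L^p(\mathbb{R}^d\times\mathbb{T}^d)$. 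Since a scalar torus multiplier extends with the same norm to $L^p(\mathbb{T}^d\times\mathbb{R}^d)$ by Fubini, applying $T_\phi$ fiberwise and integrating gives the averaged estimate $\int_{\mathbb{R}^d\times\mathbb{T}^d}|(T_\phi F_{\bm{x}})(\bm{y})|^p\,d\bm{y}\,d\bm{x}\le\|T_\phi\|^p\|f\|_p^p$; a parallel Fourier computation identifies $(T_\phi F_{\bm{x}})(\bm{y})=T_{W(\phi_{\bm{y}})}f(\bm{x})$ with $\phi_{\bm{y}}(\bm{n}):=\phi(\bm{n})e^{2\pi i\bm{n}\cdot\bm{y}}$, and since modulation on $\mathbb{Z}^d$ corresponds to translation on $\mathbb{T}^d$, each $\phi_{\bm{y}}$ carries the same torus-multiplier norm as $\phi$.

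The main obstacle is passing from this $\bm{y}$-averaged bound to the pointwise $\bm{y}=0$ statement that encodes $T_{W(\phi)}f$. I would handle this by first establishing the theorem for trigonometric-polynomial $\phi$: then $\mathscr{F}^{-1}[W(\phi)]=\widehat{G}\cdot K_\phi$ with $K_\phi$ the $\mathbb{Z}^d$-periodic torus kernel of $T_\phi$, and unfolding the cubes $\bm{k}+[0,1]^d$ together with the partition-of-unity property gives the tiling identity $\int_{\mathbb{R}^d}\widehat{G}(\bm{x})|K_\phi(\bm{x})|\,d\bm{x}=\|K_\phi\|_{L^1(\mathbb{T}^d)}=\|T_\phi\|_{1\to 1}$, delivering the $L^1\to L^1$ bound directly; the same unfolding combined with the Bochner-space Fubini argument above handles general $p$. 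A weak-$\ast$ approximation in $\mathscr{M}(L^p(\mathbb{T}^d))$ by trigonometric polynomials then extends the estimate to an arbitrary multiplier $\phi$. This density step, together with the justification that the scalar bound can actually be extracted from the integrated-in-$\bm{y}$ inequality, is the main technical hurdle.
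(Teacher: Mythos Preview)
The paper does not prove this theorem: it is stated with attribution to Gaudry and Fig\`a--Talamanca and a reference, and is invoked as a black box to obtain the corollary $\|\mathscr{F}^{-1}(H^\theta)\|_{L^1(\mathbb{R}^2)}\le\|R_s\|_{L^1(\mathbb{T}^2)}$. There is thus no in-paper argument to compare your attempt against.

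On the merits of your proposal: for $p=1$, which is the only case the paper actually needs, your approach is correct and clean. The identification $\mathscr{F}^{-1}[W(\phi)]=\widehat{G}\cdot K_\phi$, together with $\widehat{G}\ge 0$ and the Poisson identity $\sum_{\bm{k}}\widehat{G}(\cdot+\bm{k})=1$, gives exactly
\[
\|\widehat{G}\,K_\phi\|_{L^1(\mathbb{R}^d)}=\int_{[0,1)^d}|K_\phi(\bm z)|\sum_{\bm k}\widehat G(\bm z+\bm k)\,d\bm z=\|K_\phi\|_{L^1(\mathbb{T}^d)}=\|T_\phi\|_{1\to 1}
\]
for trigonometric $\phi$, and a weak-$\ast$ limit in $\bm M(\mathbb{R}^d)$ handles general $\phi$.

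For $1<p<\infty$ you correctly identify the gap but do not close it. The fiberwise bound only yields the averaged inequality $\int_{\mathbb{T}^d}\|T_{W(\phi_{\bm{y}})}f\|_p^p\,d\bm{y}\le\|T_\phi\|^p\|f\|_p^p$, and your key identity $T_{W(\phi)}f(\bm x)=(T_\phi F_{\bm x})(0)$ asks for the single slice $\bm y=0$, which is not recoverable from the average since $\bm y\mapsto\|T_{W(\phi_{\bm y})}f\|_p$ need not be constant. Your sentence ``the same unfolding \dots\ handles general $p$'' is where the argument breaks: for trigonometric $\phi$ the unfolding gives $\|\widehat G K_\phi\|_{L^1(\mathbb R^d)}=\|T_\phi\|_{1\to 1}$, which bounds $\|T_{W(\phi)}\|_{p\to p}$ only by $\|T_\phi\|_{1\to 1}$, not by $\|T_\phi\|_{p\to p}$. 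The actual Gaudry--Fig\`a--Talamanca (or Jodeit-type) proof uses a genuinely different mechanism for $p\neq 1$; for the purposes of the present paper, however, the $p=1$ case you do establish is all that is required.
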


We check at once that for given $\theta\in\mathbb{N}^{+}$
\begin{equation*}
H^{\theta}(\bm{\xi}) = W(\widehat{R_s})(2^{\theta}\bm{\xi}),
\end{equation*}
where $R_s$ is the modified Riesz product:
\begin{equation*}
R_s(\bm{t})=-1 + \Pi_{k=1}^{s}(1+\cos(2\pi \langle \bm{t}\, ,2^{\theta} \bm{c}^k\rangle ) 
\end{equation*}
Since the norm of multipliers from $\mathscr{M}(\dot{W}^1_1(\mathbb{R}^d),\dot{W}^1_1(\mathbb{R}^d))$ is invariant under rescaling, by Theorem \ref{gaudry} we get 
\begin{coro} For every $\theta\in\mathbb{N}^{+}$ the following inequality is satisfied 
\begin{equation*}
\|\mathscr{F}^{-1}(H^{\theta})\|_{L^1(\mathbb{R}^2)}\leq \|R\|_{L^1(\mathbb{T}^2)}\leq 2
\end{equation*}
\end{coro}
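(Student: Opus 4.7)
The plan is to derive both inequalities in one stroke: the first from Theorem \ref{gaudry} combined with the scale invariance of $L^1$ Fourier-norms, and the second from the standard positivity of Riesz products.

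First I would observe that the $L^1$-norm of an inverse Fourier transform is invariant under dilation of the input, since $\mathscr{F}^{-1}(f(\lambda\cdot))(x)=\lambda^{-2}\mathscr{F}^{-1}(f)(x/\lambda)$. Applied with $f=W(\widehat{R_s})$ and $\lambda=2^{\theta}$, the identity $H^{\theta}(\bm\xi)=W(\widehat{R_s})(2^{\theta}\bm\xi)$ reduces the first inequality to
\[
\|\mathscr{F}^{-1}(W(\widehat{R_s}))\|_{L^1(\mathbb{R}^2)}\le\|R_s\|_{L^1(\mathbb{T}^2)}.
\]
Now $\widehat{R_s}$ is a multiplier on $L^1(\mathbb{T}^2)$ of norm exactly $\|R_s\|_{L^1(\mathbb{T}^2)}$, acting by convolution with the function $R_s\in L^1(\mathbb{T}^2)$. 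Theorem \ref{gaudry} with $p=1$ then gives that $W(\widehat{R_s})$ is a Fourier multiplier on $L^1(\mathbb{R}^2)$ of norm at most $\|R_s\|_{L^1(\mathbb{T}^2)}$; since the corresponding convolution kernel is genuinely integrable (essentially the product of the bounded periodic function $R_s$ with the integrable function $\mathscr{F}^{-1}(G)$, where $G\in C^1$ has compact support), this multiplier norm coincides with $\|\mathscr{F}^{-1}(W(\widehat{R_s}))\|_{L^1(\mathbb{R}^2)}$.

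For the second inequality, I would write $R_s(t)=-1+P_s(t)$ with $P_s(t)=\prod_{k=1}^{s}\bigl(1+\cos(2\pi\langle t,\,2^{\theta}\bm{c}^k\rangle)\bigr)\ge 0$. Expanding the product yields a trigonometric polynomial indexed by sign sequences $(\epsilon_1,\dots,\epsilon_s)\in\{-1,0,1\}^{s}$, and by uniqueness of the representation (\ref{repr}), which is equivalent to (\ref{odleglosccjwiekszaniz1}) and is guaranteed by the rapid-growth conditions D, F and H on $\{\bm{c}^k\}$, the only term contributing to $\int_{\mathbb{T}^2}P_s$ is the constant one corresponding to $\epsilon_k=0$ for all $k$. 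Hence $\int_{\mathbb{T}^2}P_s=1$, so $\|P_s\|_{L^1(\mathbb{T}^2)}=1$ and by the triangle inequality $\|R_s\|_{L^1(\mathbb{T}^2)}\le\|{-1}\|_{L^1(\mathbb{T}^2)}+\|P_s\|_{L^1(\mathbb{T}^2)}=2$.

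The only mildly delicate point is identifying the $L^1(\mathbb{R}^2)$-multiplier norm delivered by Theorem \ref{gaudry} with the $L^1(\mathbb{R}^2)$-norm of $\mathscr{F}^{-1}(H^{\theta})$ rather than a general total-variation norm of a measure; this is harmless here since one can exhibit $\mathscr{F}^{-1}(H^{\theta})$ explicitly as an $L^1$ function, but it is the one place where Gaudry--Fig\`a-Talamanca must be read with some care.
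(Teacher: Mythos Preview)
Your argument is correct and follows the paper's own route: the identity $H^{\theta}=W(\widehat{R_s})(2^{\theta}\,\cdot\,)$, dilation invariance, and Theorem~\ref{gaudry} for the first inequality, then the standard positivity of the Riesz product for the second. You have in fact supplied the details the paper leaves implicit (in particular the bound $\|R_s\|_{L^1(\mathbb{T}^2)}\le 2$, which the paper states without comment), and your remark on identifying the multiplier norm with an actual $L^1$-norm is a fair caveat that the paper glosses over.
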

Another property of $H^{\theta}$ is the following
\begin{lemm}\label{szacowanko}
 There exists  $\theta=\theta(s)\in{\mathbb{N}^+}$ such that 
 \begin{equation*}
  \left\|\mathscr{F}^{-1}\left(\frac{\xi_2}{\xi_1}H^{\theta}\right)\right\|_{L^1(\mathbb{R}^2)}\leq C
 \end{equation*}
where constant $C$ is independent of $s$.  
\end{lemm}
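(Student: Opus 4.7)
The plan is to replace $\xi_2/\xi_1$ by the constant $q_2/q_1$ on each bump $G(2^\theta(\xi-\bm{q}))$ of $H^\theta$. First I would choose $\theta=\theta(s)$ large enough that the bump supports $\bm{q}+[-2^{-\theta},2^{-\theta}]^2$ stay bounded away from the line $\{\xi_1=0\}$ (possible since, after the integer rescaling of the $\bm{c}^j$'s, every $\bm{q}\in\Lambda_s$ satisfies $|q_1|\ge 1/2$) and that $2^{s-\theta}$ remains of order one. Then I split
\[\frac{\xi_2}{\xi_1}\,H^\theta(\xi) \;=\; M(\xi)+E(\xi),\]
with $M(\xi):=\sum_{\bm{q}\in\Lambda_s}\frac{q_2/q_1}{2^{\chi(\bm{q})}}\,G(2^\theta(\xi-\bm{q}))$ collecting the frozen values and $E$ being the remainder.

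For the main part $M$, the triangle inequality together with the dilation/translation invariance of $\|\mathscr{F}^{-1}(\cdot)\|_{L^1(\mathbb{R}^2)}$ gives
\[\|\mathscr{F}^{-1}M\|_{L^1(\mathbb{R}^2)} \;\le\; \|\mathscr{F}^{-1}G\|_{L^1(\mathbb{R}^2)}\sum_{\bm{q}\in\Lambda_s}\frac{|q_2/q_1|}{2^{\chi(\bm{q})}}.\]
A short geometric-series argument based on condition H shows that if $k(\bm{q})$ denotes the largest index with $\zeta_{k(\bm{q})}(\bm{q})\ne 0$, then $|q_1|\ge |c_1^{k(\bm{q})}|/2$ and $|q_2|\le 2|c_2^{k(\bm{q})}|$, so by condition E one obtains $|q_2/q_1|\le 4/(3^{s+2}s)$ uniformly on $\Lambda_s$. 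Combined with $\sum_{\bm{q}\in\Lambda_s}2^{-\chi(\bm{q})}=2^s-1$, this contribution is $O((2/3)^s/s)=o(1)$ as $s\to\infty$.

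For the error $E$, set $y=\xi-\bm{q}$ and introduce
\[\Phi_{\bm{q}}(y) \;:=\; \frac{q_2+y_2}{q_1+y_1}-\frac{q_2}{q_1}\;=\;\frac{q_1 y_2-q_2 y_1}{q_1(q_1+y_1)},\]
which vanishes at $y=0$. Using again conditions E, H and $|q_1|\ge 1/2$, a routine computation with the explicit formulas $\partial_{y_1}^k\Phi_{\bm{q}}=(-1)^k k!(q_2+y_2)(q_1+y_1)^{-k-1}$ and $\partial_{y_2}\partial_{y_1}^k\Phi_{\bm{q}}=(-1)^k k!(q_1+y_1)^{-k-1}$ yields $|\Phi_{\bm{q}}(y)|\le C\cdot 2^{-\theta}$ on the square $|y_i|\le 2^{-\theta}$ and uniform (in $\bm{q}$) bounds on all partial derivatives $\partial^\beta\Phi_{\bm{q}}$ there, depending only on $|\beta|$. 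Consequently the rescaled bump $\tilde\Phi_{\bm{q}}(\eta):=\Phi_{\bm{q}}(\eta/2^\theta)G(\eta)$ satisfies $\|\tilde\Phi_{\bm{q}}\|_{C^N(\mathbb{R}^2)}\le C_N\cdot 2^{-\theta}$ uniformly in $\bm{q}\in\Lambda_s$. Since $\tilde\Phi_{\bm{q}}$ is compactly supported in $[-1,1]^2$, the standard estimate $\|\mathscr{F}^{-1}f\|_{L^1(\mathbb{R}^2)}\le C\|f\|_{C^4}$ (via integration by parts) gives $\|\mathscr{F}^{-1}\tilde\Phi_{\bm{q}}\|_{L^1}\le C\cdot 2^{-\theta}$; summing via dilation/translation invariance,
\[\|\mathscr{F}^{-1}E\|_{L^1(\mathbb{R}^2)} \;\le\; C\cdot 2^{-\theta}\sum_{\bm{q}\in\Lambda_s}2^{-\chi(\bm{q})}\;\le\; C\cdot 2^{s-\theta},\]
which is $O(1)$ once $\theta\ge s+O(1)$.

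The hard part will be the uniform-in-$\bm{q}$ bookkeeping for the $C^N$ bound on $\tilde\Phi_{\bm{q}}$: although $|q_1|$ can grow arbitrarily with $s$ and with $\bm{q}$, the smallness $|q_2/q_1|\le C/(3^{s+2}s)$ from condition E exactly compensates, keeping the expressions $(q_2+y_2)/(q_1+y_1)^{k+1}$ and $1/(q_1+y_1)^{k+1}$ tame term by term. This is precisely the role played by the quantitative form of condition E during the construction of the $\bm{c}^k$, and once it is verified everything else reduces to the routine estimates above.
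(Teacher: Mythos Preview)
Your approach is essentially the same as the paper's: localize to each bump of $H^\theta$, replace $\xi_2/\xi_1$ by its frozen value $q_2/q_1$, and control the remainder using smoothness of $\xi_2/\xi_1$ away from $\{\xi_1=0\}$ together with the smallness of the bump radius $2^{-\theta}$. The paper packages the ``freeze plus Taylor remainder'' step into its Lemma~\ref{jakpozbycsieulamkow2} and then views $\eta_{\bm q}\cdot(\xi_2/\xi_1)$ as an $L^1$--Fourier multiplier acting on $\mathscr F^{-1}(H^\theta)$ by convolution, whereas you carry out the same computation by hand on each rescaled bump; the arithmetic with conditions~E and~H is identical.

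There is, however, one technical slip. You claim a bound $\|\tilde\Phi_{\bm q}\|_{C^N}\le C_N\,2^{-\theta}$ for $\tilde\Phi_{\bm q}(\eta)=\Phi_{\bm q}(\eta/2^\theta)\,G(\eta)$, but $G$ is \emph{not} in $C^1$: the one--dimensional factor $g(t)=\max\{1-|t|,0\}^2$ has $g'(0^+)=-2\neq 2=g'(0^-)$, so $\tilde\Phi_{\bm q}$ has no classical $C^N$ norm and the ``standard estimate via integration by parts'' cannot be applied directly to it. The paper sidesteps this by introducing a \emph{smooth} cutoff $\eta_{\bm q}$ equal to $1$ on the support of the $\bm q$-th bump and writing
\[
\mathscr F^{-1}\bigl(\eta_{\bm q}\cdot\tfrac{\xi_2}{\xi_1}\cdot H^\theta\bigr)
=\mathscr F^{-1}\bigl(\eta_{\bm q}\cdot\tfrac{\xi_2}{\xi_1}\bigr)\ast\mathscr F^{-1}\bigl(H^\theta_{\bm q}\bigr),
\]
so that the $C^{k+1}$ estimate is applied only to the smooth factor $\eta_{\bm q}\cdot(\xi_2/\xi_1)$, while $\|\mathscr F^{-1}G\|_{L^1}$ (which is finite by an explicit computation) absorbs the limited regularity of $G$. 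Your argument is repaired verbatim by inserting such an $\eta$ between $\Phi_{\bm q}(\cdot/2^\theta)$ and $G$ and bounding the two convolution factors separately; after this fix, the rest of your bookkeeping goes through.
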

The proof of this fact one can find in the Appendix. From now one we put $H:= H^{\theta(s)}$.
\begin{uwaga}
We have to remember that homogeneous, non-constant functions are not multipliers on $L^{1}(\mathbb{R}^d)$. The above lemma holds true only due to the special form of $H^{\theta}$, mainly the strong concentration of its support near $x_1$-axis and because of small size of its support.  
\end{uwaga}

Since $H$ is bounded, continuous and has compact support separated from the axis $\{\xi_1=0\}$, the function
$\frac{H}{\xi_1}$ is a tempered distribution. We define a tempered distribution $h$ by the formula
\begin{equation*}
h(\phi)=\frac{H}{x_1}(\mathscr{F}^{-1}{\psi})\qquad \forall \psi\in \mathscr{S}.
\end{equation*}
By standard properties of the Fourier transform acting on tempered distributions, we get
\begin{equation}\label{pochodnesadobre}
\begin{split}
\fourier{\frac{\partial}{\partial x_1}h}&=H\\
\fourier{\frac{\partial}{\partial x_2}h}&=\frac{\xi_2}{\xi_1}H.
\end{split}
\end{equation}
Since we already proved that both $H$ and $\frac{\xi_2}{\xi_1}H$ are the Fourier
transforms of $L^1$ functions, then (\ref{pochodnesadobre}) means that
$h\in \dot{W}^1_1(\mathbb{R}^d)$ with the norm bounded by a constant independent of $s$.
Now we estimate the norm of $T_m h$ from below. We have
\begin{equation}\label{normaT}
\begin{split}
\|T_m h\|_{\dot{W}^1_1(\mathbb{R}^2)}&=\|\frac{\partial}{\partial x_1}T_{m} h\|_{L_1(\mathbb{R}^2)}.
\end{split}
\end{equation}
Since $T_{m}:\dot{W}^1_1(\mathbb{R}^2)\rightarrow\dot{W}^1_1(\mathbb{R}^2)$, obviously $\frac{\partial}{\partial x_1}T_{m} h\in L_1(\mathbb{R}^2)$. Hence the operator
$Q$ defined by
\begin{equation*}
 Q g=\frac{\partial}{\partial x_1}T_{m} h*g
\end{equation*}
acts on $L_1(\mathbb{R}^2)$ and
\begin{equation}\label{qnl1norma}
\|Q\|=\|\frac{d}{dx}T_{m} h\|_{L_1(\mathbb{R}^2)}.
\end{equation}
By (\ref{pochodnesadobre}),
\begin{equation*}
\fourier{Qg}(\xi)=m(\bm{\xi})H(\bm{\xi})\fourier{g}(\bm{\xi}).
\end{equation*}
We define a function $P$ by the formula
\begin{equation}\label{definicjaPs}
P(\xi)=\sum_{\bm{p}\in\mathbb{Z}^2} m(\bm{p})H(\bm{p}) e^{2\pi i \langle\bm{p},\bm{\xi}\rangle}
\end{equation}
Since $H(\bm{p})$ takes non zero values only for $\bm{p}\in \Lambda_s$ and $\Lambda_s$ is finite, the function $P$ is a polynomial.
By Theorem \ref{deleeuwzplaszczyznydotorusa} we get
\begin{equation}\label{qnnorma}
\|Q\|\geq \|P\|_{L_1(\mathbb{T}^2)}.
\end{equation}
We put
\begin{equation*}
a(\bm{p})=\begin{cases}(-1)^k
H(\bm{p})\;\;\mbox{when } \bm{p}\in\Lambda_s\mbox{ and }\bm{p}\in \bm{B}(c^{k},r_{k})\cup\bm{B}(-c^{k},r_{k}),\\
0\qquad\qquad\;\;\;\mbox{otherwise}.
\end{cases}
\end{equation*}
Since $\Lambda_s$ is a finite set, the function
\begin{equation*}
Z(\bm{\xi})=\sum_{\bm{p}\in\mathbb{N}^2}  a(\bm{p}) e^{2\pi i \langle\bm{p},\bm{\xi}\rangle}
\end{equation*}
is a polynomial.
By the triangle inequality,
\begin{equation}\label{GRnorma}
\|P\|_{L_1(\mathbb{T}^2)}\geq\|Z\|_{L_1(\mathbb{T}^2)}-\|P-Z\|_{L_1(\mathbb{T}^2)}.
\end{equation}
By the conditions I and  A, any coefficient of $Z$ differs by at most $\eps$ from the
corresponding coefficient of $P$. Since both polynomials have no more then $3^s$
non-zero coefficients, we get
\begin{equation}\label{niewielebrakujedoszczescia}
\|Z-P\|_{L_1(\mathbb{T}^2)}\leq \eps 3^{s}.
\end{equation}
It is easy to verify that
\begin{equation*}
Z(\bm{\xi})=\sum_{j=1}^{s} (-1)^j \cos\left(2\pi \langle\bm{c}^{j},\,\bm{\xi}\rangle\right)\prod_{1\leq k< j}\left(1+\cos\left(2\pi\langle\bm{c}^{k},\,\bm{\xi}\rangle\right)\right).
\end{equation*}
By the condition F and Lemma \ref{lematpolostateczny},
\begin{equation*}
\|Z\|_{L_1(\mathbb{T}^2)}\geq Cs.
\end{equation*}
Combining now successively (\ref{normaT}), (\ref{qnl1norma}),
(\ref{qnnorma}), (\ref{GRnorma}) and  (\ref{niewielebrakujedoszczescia}),
we get
\begin{equation*}
\|T_m h\|_{\dot{W}^1_1(\mathbb{R}^2)}\geq Cs - \eps 3^{s},
\end{equation*}
Setting $\eps= C 3^{-s-1} s$
\begin{equation*}
\|T_m h\|_{\dot{W}^1_1(\mathbb{R}^2)}\geq Cs
\end{equation*}
which by the uniform boundedness of $\|h\|_{\dot{W}^1_1(\mathbb{R}^2)}$ proves that $T$ is unbounded.

\subsection{Proof in case IIb}
The proof in this case is very similar to case IIa. The only difference is that due to lack of symmetry we have to replace Lemma \ref{lematpolostateczny} by its asymmetric counterpart. We will use the following result from \cite{MR1649869}.
\begin{lem}\label{lematostateczny}
There exist $C>0$ such that for every $n\in\mathbb{N}^{+}$ there exists $M=M(n)$ such that for any sequence $\{\bm{d}^k\}_{k=1}^{n}\subset\mathbb{Z}^d$, which satisfies
\begin{equation*}
|\bm{d}^{k+1}|> M |\bm{d}^{k}|, 
\end{equation*} 
following inequality holds
\begin{equation*}
\|\sum_{j=1}^{n} e^{2\pi i \langle\bm{d}^j,\bm{\xi}\rangle}\prod_{1\leq k< j}\left(1+\cos\left(2\pi \bm{d}^k\xi\right)\right)\|_{L_1(\mathbb{T}^r)}\geq Cn.
\end{equation*} 
\end{lem}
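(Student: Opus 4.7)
The plan is to parallel the argument sketched for the symmetric Lemma \ref{lematpolostateczny}, replacing the trigonometric polynomial with its randomized Steinhaus analogue and transferring a known lower bound for random sums.

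First, I would exploit lacunarity. Given $n$, choose $M = M(n)$ so large that the hypothesis $|\bm{d}^{k+1}| > M|\bm{d}^{k}|$ forces
\[
\sum_{k=1}^{n-1} \frac{|\bm{d}^k|}{|\bm{d}^{k+1}|} < \kappa,
\]
where $\kappa$ is a small absolute constant to be specified. The analogue of Theorem 5 on p.~563 of \cite{MR0240563} invoked in the remark following Lemma \ref{lematpolostateczny} applies equally to pure exponentials $e^{2\pi i\langle\bm{d}^j,\bm{\xi}\rangle}$ as to cosines, because each exponential corresponds to a single Steinhaus variable rather than a real trigonometric pair. Thus for $\kappa$ sufficiently small (depending only on an absolute constant), the $L^1(\mathbb{T}^d)$-norm in question is equivalent, up to universal multiplicative constants, to the mean
\[
\mathbb{E}\left|\sum_{j=1}^{n}\varepsilon_j\prod_{1\leq k<j}\bigl(1+\operatorname{Re}\varepsilon_k\bigr)\right|,
\]
where $\varepsilon_1,\ldots,\varepsilon_n$ are independent Steinhaus variables (uniform on the unit circle). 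Crucially, the exponentials $e^{2\pi i \langle\bm{d}^k,\bm{\xi}\rangle}$ play the role of $\varepsilon_k$, and the factors $1+\cos(2\pi\langle\bm{d}^k,\bm{\xi}\rangle) = 1+\operatorname{Re}e^{2\pi i\langle\bm{d}^k,\bm{\xi}\rangle}$ translate to $1+\operatorname{Re}\varepsilon_k$.

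Second, I would invoke Lemma 1 of \cite{MR1649869}, which is exactly the asymmetric lower bound for such Steinhaus averages: it furnishes an absolute constant $C>0$ such that the displayed expectation is bounded below by $Cn$. Combining this with the lacunarity comparison from step one yields
\[
\left\|\sum_{j=1}^{n} e^{2\pi i\langle\bm{d}^j,\bm{\xi}\rangle}\prod_{1\leq k<j}\bigl(1+\cos(2\pi\langle\bm{d}^k,\bm{\xi}\rangle)\bigr)\right\|_{L^1(\mathbb{T}^d)} \geq C'n,
\]
for a new absolute constant $C'>0$, as desired.

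The main obstacle is the lacunarity-to-independence transference in the asymmetric setting: the proof for cosines uses standard Sidonicity of lacunary sequences, but here the Riesz-type product $\prod(1+\operatorname{Re}\varepsilon_k)$ mixes real and complex parts, so one must verify that the replacement $e^{2\pi i\langle\bm{d}^j,\bm{\xi}\rangle}\leftrightarrow\varepsilon_j$ is compatible with the product structure uniformly in $n$. Once $M(n)$ is chosen to drive the ratio sum to a small universal constant, classical lacunary comparison gives equivalence with bounds independent of $n$, so the final constant $C$ is truly absolute, not merely dependent on $n$.
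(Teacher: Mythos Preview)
The paper does not give its own proof of this lemma: it simply quotes it as ``the following result from \cite{MR1649869}''. Your sketch is therefore not competing with an argument in the paper but rather reconstructing one along the lines of the remark following Lemma~\ref{lematpolostateczny}, and at that level it is essentially the intended route---lacunary transference to independent Steinhaus variables via \cite{MR0240563}, followed by the probabilistic lower bound from \cite{MR1649869}.

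One caution: you identify Lemma~1 of \cite{MR1649869} as ``exactly the asymmetric lower bound'', but in the paper's own remark that lemma is invoked for the \emph{symmetric} (cosine) Steinhaus expression. Whether the same Lemma~1 literally covers
\[
\mathbb{E}\Bigl|\sum_{j=1}^{n}\varepsilon_j\prod_{k<j}(1+\operatorname{Re}\varepsilon_k)\Bigr|\ge Cn
\]
or whether \cite{MR1649869} contains a separate statement for the exponential case is something you should verify against the source before claiming the two-step argument closes. The transference step itself (Meyer's theorem in \cite{MR0240563}) is robust enough to handle the complex exponentials, as you note; the only genuine gap in your sketch is pinning down precisely which result in \cite{MR1649869} supplies the Steinhaus lower bound in the asymmetric form, since the paper does not spell this out.
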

For fixed $\eps>0$ we construct the sequence of balls $\bm{B}(\bm{c}^n,r_n)$ and $\bm{B}(-\bm{c}^n,r_n)$ satisfying conditions B-I and 
\begin{itemize}\label{indukcjaIIb}
\item[$A^{'}$.] $|m(\xi)-1| <\eps$ for $\bm{B}(\bm{c}^n,r_{n})$ and $|m(\xi)| <\eps$ for $\xi\in\bm{B}(-\bm{c}^n,r_{n})$ and $n=1,2,\ldots,s$,\vspace{0.4em}
\end{itemize}
The inductive construction is similar to that in the case IIa.
Then, analogously an as in the case IIa, we define $\theta(s)$, $h$, and we get
\begin{equation*}
\|h\|_{\dot{W}^{1}_{1}(\mathbb{R}^2)}\leq C,
\end{equation*}
Analogously as in the case IIa we define polynomial $P$ by the formula (\ref{definicjaPs}) and due to the similar reasons 
 \begin{equation*}
\|T_m h\|_{\dot{W}^{1}_{1}(\mathbb{R}^2)}\geq \|P\|_{L_1(\mathbb{T}^2)}.
\end{equation*}
Then we put
\begin{equation*}
a(\bm{p})=\left\{\begin{array}{cl}
H(\bm{p})&\mbox{ when } \bm{p}\in\Lambda_s\mbox{ and }\bm{p}\in \bm{B}(\bm{c}^{k},r_{k}),\\
0&\mbox{ otherwise },
\end{array}\right.
\end{equation*}
where $k\in\{1,2,\ldots,s\}$. The function $a(\cdot)$ differs from its analog in the case IIa.
We define a polynomial $Z$ by
\begin{equation*}
Z(\bm{\xi})=\sum_{\bm{p}\in\mathbb{Z}^2}  a(\bm{p}) e^{2\pi i \langle\bm{p},\bm{\xi}\rangle}.
\end{equation*}  
It is easy to check that
\begin{equation*}
Z(\xi)=\sum_{j=1}^{2n} e^{2\pi i \langle\bm{c}^{j},\bm{\xi}\rangle}\prod_{ 1\leq k<j}\left(1+\cos\left(2\pi \langle\bm{c}^{k},\,\bm{\xi}\rangle\right)\right),
\end{equation*}
and similar reasoning as in the case IIa (\ref{niewielebrakujedoszczescia}) gives 
\begin{equation*}
\|P\|_{L_1(\mathbb{T}^2)}\geq\|Z\|_{L_1(\mathbb{T}^2)}-\eps 3^{s}.
\end{equation*} 
By Lemma \ref{lematostateczny},
\begin{equation*}
\|Z\|_{L_1(\mathbb{T}^2)}\geq Cs.
\end{equation*}
Hence
\begin{equation*}
\|T_m h\|_{\dot{W}^1_1(\mathbb{R}^2)}\geq Cs - \eps 3^{s},
\end{equation*}
and setting $\eps= C 3^{-s-1} s$ we get
\begin{equation*}
\|T_m h\|_{\dot{W}^1_1(\mathbb{R}^2)}\geq Cs
\end{equation*}	
which by uniform boundedness of $\|h\|_{\dot{W}^1_1(\mathbb{R}^2)}$ proves
that $T$ is unbounded
\end{proof}
\section{Appendix}
\subsection{Proof of lemma \ref{punktowazbieznoscmnoznikow}}

First of all we estimate the supremum norm of the multiplier by the norm of the corresponding operator.
\begin{lem}\label{ograniczeniepozazerem}
Let $m(\bm{\xi})$ be in $\mathscr{M}(\dot{W}^1_1(\mathbb{R}^d),\dot{W}^1_1(\mathbb{R}^d))$, then
\begin{equation*}
\|m\|_{C(\mathbb{R}^d\backslash\{0\})}\leq \|T_m\|.
\end{equation*}
\end{lem}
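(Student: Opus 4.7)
Fix $\xi_0\in\mathbb{R}^d\setminus\{0\}$; since $m$ is continuous on $\mathbb{R}^d\setminus\{0\}$, it suffices to prove the pointwise bound $|m(\xi_0)|\le\|T_m\|$ and then take a supremum. The plan is to probe $T_m$ with a family of Schwartz wave packets concentrated at $\xi_0$ in frequency and compare $\dot W^1_1$ norms on both sides via Fatou's lemma. Pick a nonzero real-valued $\phi\in\mathscr{S}(\mathbb{R}^d)$ and, for $\lambda>0$, set
\begin{equation*}
\tilde f_\lambda(x):=\lambda^{-d}\phi(x/\lambda)e^{2\pi i\xi_0\cdot x},\qquad \hat{\tilde f_\lambda}(\xi)=\hat\phi(\lambda(\xi-\xi_0)),
\end{equation*}
so that $\hat{\tilde f_\lambda}$ concentrates in a ball of radius $\sim 1/\lambda$ around $\xi_0$.

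A direct differentiation of $\tilde f_\lambda$ together with the substitution $y=x/\lambda$ gives
\begin{equation*}
\|\partial_j\tilde f_\lambda\|_{L^1}=\int_{\mathbb{R}^d}\bigl|2\pi i(\xi_0)_j\phi(y)+\lambda^{-1}(\partial_j\phi)(y)\bigr|\,dy,
\end{equation*}
which by dominated convergence tends to $2\pi|(\xi_0)_j|\|\phi\|_{L^1}$ as $\lambda\to\infty$, so that $\|\tilde f_\lambda\|_{\dot W^1_1}\to 2\pi\|\phi\|_{L^1}\sum_j|(\xi_0)_j|$. Analogously, rescaling the defining integral of $T_m\tilde f_\lambda$ yields
\begin{equation*}
\partial_j T_m\tilde f_\lambda(x)=\lambda^{-d}e^{2\pi i\xi_0\cdot x}\bigl[2\pi i(\xi_0)_j q_\lambda(x/\lambda)+\lambda^{-1}(\partial_j q_\lambda)(x/\lambda)\bigr],
\end{equation*}
where $q_\lambda(y):=\int m(\xi_0+\eta/\lambda)\hat\phi(\eta)e^{2\pi i\eta\cdot y}\,d\eta$. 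Continuity of $m$ at $\xi_0$ combined with $\hat\phi,\,\eta_j\hat\phi\in L^1$ gives (via dominated convergence in $\eta$) the pointwise limit $q_\lambda(y)\to m(\xi_0)\phi(y)$ and the uniform estimate $\sup_{\lambda,y}|\partial_j q_\lambda(y)|<\infty$; consequently the integrand of $\|\partial_j T_m\tilde f_\lambda\|_{L^1}$ converges pointwise to $2\pi|(\xi_0)_j||m(\xi_0)||\phi(y)|$.

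Fatou's lemma now gives $\liminf_{\lambda\to\infty}\|\partial_j T_m\tilde f_\lambda\|_{L^1}\ge 2\pi|(\xi_0)_j||m(\xi_0)|\|\phi\|_{L^1}$. Summing over $j$ and comparing with the a priori bound $\|T_m\tilde f_\lambda\|_{\dot W^1_1}\le\|T_m\|\,\|\tilde f_\lambda\|_{\dot W^1_1}$ produces, after cancelling the strictly positive factor $2\pi\|\phi\|_{L^1}\sum_j|(\xi_0)_j|$, the desired inequality $|m(\xi_0)|\le\|T_m\|$. The only delicate point I foresee is the Fatou step itself: a more direct attempt to prove $L^1$-convergence $\partial_j T_m\tilde f_\lambda\to m(\xi_0)\partial_j\tilde f_\lambda$ would require controlling the $L^1$ norm of the inverse Fourier transform of merely continuous (not smooth) compactly supported functions, which in general fails; Fatou's inequality circumvents this obstruction by asking only for pointwise convergence of the integrands.
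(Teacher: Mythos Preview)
Your argument is correct. The test family $\tilde f_\lambda$ is essentially the same wave packet used in the paper (after reparametrising $\lambda\leftrightarrow 1/\lambda$), and your Fatou step is sound: the pointwise limit $q_\lambda(y)\to m(\xi_0)\phi(y)$ follows from boundedness and continuity of $m$ at $\xi_0$ together with $\hat\phi\in L^1$, and the uniform bound on $\partial_j q_\lambda$ comes from $\|m\|_\infty\int|\eta_j\hat\phi(\eta)|\,d\eta<\infty$. Summing the Fatou inequalities and using superadditivity of $\liminf$ then yields $|m(\xi_0)|\le\|T_m\|$ as you claim.

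The paper extracts the value $m(\xi_0)$ by a shorter route that avoids Fatou altogether. After rotating so that $\xi_0=(1,0,\dots,0)$, it simply applies the trivial inequality $|\hat g(\xi_0)|\le\|g\|_{L^1}$ to $g=\partial_1(T_m h_\lambda)$, noting that $\widehat{\partial_1 T_m h_\lambda}(\xi_0)=2\pi i\, m(\xi_0)\,\hat h_\lambda(\xi_0)$. Since $\hat h_\lambda(\xi_0)$ is an explicit nonzero constant (after choosing $f\ge0$), one gets $|m(\xi_0)|$ bounded by $\|T_m\|\,\|h_\lambda\|_{\dot W^1_1}$ divided by that constant, and the upper bound for $\|h_\lambda\|_{\dot W^1_1}$ has a main term matching the constant plus an $O(\lambda)$ error. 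Letting $\lambda\to 0$ finishes. This needs only one partial derivative and no pointwise analysis of $T_m h_\lambda$ in physical space; your approach trades that simplicity for a direct computation of both $\dot W^1_1$ norms in the limit, which is slightly heavier but perfectly valid.
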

\begin{proof}
Using the invariance of the class of multipliers by dilation and rotation, it is sufficient to prove that $|m(\xi)| \leq \|T_m\|$ for $\xi$ a fixed point, say $\xi = (1, 0, \ldots , 0)$. Let $f\geq 0$ and $f\in\mathscr{S}$. We will test the operator $T_m$ on the family of functions
\begin{equation}
 h_{\lambda} (\bm{x}) = \lambda e^{ 2iπ\langle \bm{x},\xi\rangle} f (\lambda \bm{x}).
\end{equation}
On one hand the value at $\bm{\xi}$ of the Fourier transform of $\partial_1 (T_m h_{\lambda})$ is equal to
$2\pi m(\bm{\xi}) \mathscr{F}(f) (0)$. On the other hand we compute the gradient and find that
\begin{equation}
| \nabla h_{\lambda} (\bm{x})| \leq 2\pi \lambda |f (\lambda \bm{x})| + \lambda^2 |\nabla f (\lambda \bm{x})|.
\end{equation}
Hence
\begin{equation}
 |2 \pi m(\bm{\xi}) \mathscr{F}(f) (0)| \leq \|T_m\|\| \nabla h_{\lambda}\|_{1}\leq\|T_m\|\left(  2\pi\mathscr{F}(f)(0) + \lambda\|\nabla f\|_{L^1(\mathbb{R}^d)}\right)
\end{equation}
The conclusion follows at once, by having $\lambda$ tend to zero.

\end{proof}

Now we can prove Lemma \ref{punktowazbieznoscmnoznikow}. We derive it from the corresponding result for measures (Bochner's theorem).
\begin{proof}{of the Lemma \ref{punktowazbieznoscmnoznikow}}.\\
By Lemma \ref{ograniczeniepozazerem} 
\begin{equation*}
\|m\|_{C(\mathbb{R}^d\backslash\{0\})}\leq \tilde{C}.
\end{equation*}
Hence $m(\cdot)\in L^{\infty}(\mathbb{R}^d)$. 
Clearly for $j\in\{1,2,\ldots, d\}$ and $f\in\mathscr{S}(\mathbb{R}^d)$,
\begin{equation*}
\|\frac{\partial}{\partial x_j} T_{m_k}f\|_{L_1(\mathbb{R}^d)}\leq C \|f\|_{\dot{W}^1_1(\mathbb{R}^d)}.
\end{equation*}

For fixed $f$, by *-weak compactness of unit ball in $\bm{M}(\mathbb{R}^d)$ there exists a sequence $m_{k_j}(\cdot)$ such that for $j\in\{1,2,\ldots, d\}$ there exist measures $\mu_j$ for which
\begin{equation*}
\frac{\partial}{\partial x_j} T_{m_{k_l}}f \stackrel{w*}\longrightarrow  \mu_j.
\end{equation*}
For  $f\in\mathscr{S}(\mathbb{R}^d)$ we have
\begin{equation*}
\mathscr{F}(\frac{\partial}{\partial x_j} T_{m_{k_l}}f)(\bm{\xi})=m_{k_l}(\bm{\xi})\xi_j\fourier{f}(\bm{\xi}).
\end{equation*} 
By the assumptions of the Lemma \ref{punktowazbieznoscmnoznikow}, the pointwise limits of Fourier transforms of these functions
exist and are continuous. By Theorem
\ref{slaba*miary}, for $j\in\{1,2,\ldots, d\}$,
\begin{equation*}
\begin{split}
\fourier{\mu_j}(\bm{\xi})&= m(\bm{\xi})\xi_j\fourier{f}(\bm{\xi}),\\
\|\mu_j\|_{\bm{M}(\mathbb{R}^d)}&\leq C \|f\|_{\dot{W}^1_1(\mathbb{R}^d)},
\end{split}
\end{equation*}
Since $f\in\mathscr{S}(\mathbb{R}^d)$ and $m(\cdot)\in L_{\infty}(\mathbb{R}^d)$, it follows that $m(\cdot)\fourier{f}\in L_{2}(\mathbb{R}^d)$. Then  $\mathscr{F}^{-1}(m(\cdot)\fourier{f})\in L_{2}(\mathbb{R}^d)$. Repeating this for $\xi_j f$ we get that $\mu_j$ is a function. Therefore $\mu_j\in L_{1}(\mathbb{R}^d)$ and
\begin{equation*}
\|\mu_j\|_{L_{1}(\mathbb{R}^d)}=\|\mu_j\|_{\bm{M}(\mathbb{R}^d)}\leq C \|f\|_{\dot{W}^1_1(\mathbb{R}^d)}.
\end{equation*} 
Since the Fourier transform is bijective on tempered distributions,
\begin{equation*}
\frac{\partial}{\partial x_j}(\mathscr{F}^{-1}(m(\cdot)\fourier{f}))=\mu_j.
\end{equation*}
Hence it is easy to check that  
\begin{equation*}
\|T_mf\|_{\dot{W}^1_1(\mathbb{R}^d)}\leq C\|f\|_{\dot{W}^1_1(\mathbb{R}^d)} \quad \mbox{for }\,f\in\mathscr{S}(\mathbb{R}^d).
\end{equation*}
and $T_m$ could be uniquely extended to bounded operator on  $\dot{W}^1_1(\mathbb{R}^d)$.  
\end{proof}

\subsection{Proof of lemma \ref{szacowanko}}
We begin with two lemmas.
We study the operator given by sufficiently smooth multiplier acting on a subspace of $L^1$ functions with compactly supported Fourier transform. Let $k$ be the smallest even number greater then $\lceil\frac{d}{2}\rceil$, $d\geq 2$. We fix function $\eta \in C^{\infty}_0$ supported in ball of radius $1$.  
\begin{lem}\label{jakpozbycsieulamkow}
Let $0<\eps \leq r <1$  and $f\in C^{k+1}(\bm{B}(0,r))$ with all derivatives of order less or equal to k that
vanish at 0 Then for every following inequality holds
\begin{equation}
\|\mathscr{F}^{-1}(\eta_{\eps}f)\| \leq C(\eta) \eps \sup_{|\bm{x}|\leq 1} \left( \sum_{|\bm{\alpha}|=k+1} |D^{\bm{\alpha}} f (\bm{x})|\right),
\end{equation}
where $\eta_{\eps}(x)=\eta(\eps x)$.
\end{lem}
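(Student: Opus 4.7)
The plan is to bound $\|\mathscr{F}^{-1}(\eta_\eps f)\|_{L^1(\mathbb{R}^d)}$ by trading the $L^1$ norm on the physical side for a weighted $L^2$ norm via Cauchy--Schwarz, then transferring the weighted $L^2$ norm to a Sobolev-type norm of $\eta_\eps f$ on the frequency side via Plancherel. The small factor $\eps$ will emerge from Taylor's theorem: interpreting $\eta_\eps$ as a localizer to the ball $B(0,\eps)\subset B(0,r)\subset B(0,1)$, the hypothesis that all derivatives of $f$ of order at most $k$ vanish at $0$ gives $|D^\gamma f(x)|\leq C|x|^{k+1-|\gamma|}\Xi\leq C\eps^{k+1-|\gamma|}\Xi$ on $B(0,\eps)$, where $\Xi:=\sup_{|y|\leq 1}\sum_{|\alpha|=k+1}|D^\alpha f(y)|$.

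Concretely, the proof is a four-step computation. First, Cauchy--Schwarz with the weight $(1+|x|^2)^{-k/2}$ yields
\begin{equation*}
\|\mathscr{F}^{-1}(\eta_\eps f)\|_{L^1(\mathbb{R}^d)} \leq \bigl\|(1+|x|^2)^{-k/2}\bigr\|_{L^2(\mathbb{R}^d)}\,\bigl\|(1+|x|^2)^{k/2}\mathscr{F}^{-1}(\eta_\eps f)\bigr\|_{L^2(\mathbb{R}^d)},
\end{equation*}
and the first factor is finite because $2k>d$, which is precisely why the lemma chooses $k>\lceil d/2\rceil$. Second, since $k$ is even, $(1+|x|^2)^{k/2}$ is a polynomial in $x$, so by Plancherel and the identity $\mathscr{F}(x^\alpha g)=c_\alpha D^\alpha\mathscr{F}(g)$,
\begin{equation*}
\bigl\|(1+|x|^2)^{k/2}\mathscr{F}^{-1}(\eta_\eps f)\bigr\|_{L^2(\mathbb{R}^d)} \leq C\sum_{|\alpha|\leq k}\|D^\alpha(\eta_\eps f)\|_{L^2(\mathbb{R}^d)}.
\end{equation*}
Third, Leibniz combined with the scaling $\|D^\beta\eta_\eps\|_{L^\infty}\leq C_\beta(\eta)\eps^{-|\beta|}$ and the Taylor bound yields $|D^\alpha(\eta_\eps f)(x)|\leq C_\alpha(\eta)\eps^{k+1-|\alpha|}\Xi$ on the support $B(0,\eps)$. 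Fourth, since $\eta_\eps f$ is supported in a ball of volume $\sim\eps^d$,
\begin{equation*}
\|D^\alpha(\eta_\eps f)\|_{L^2(\mathbb{R}^d)}\leq C_\alpha(\eta)\,\eps^{k+1-|\alpha|+d/2}\Xi,
\end{equation*}
and summing over $|\alpha|\leq k$ the largest contribution comes from $|\alpha|=k$, yielding $C(\eta)\eps^{1+d/2}\Xi\leq C(\eta)\eps\Xi$ since $\eps<1$.

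There is no serious obstacle; the only delicate point is the interlocking choice of $k$. One needs $k>d/2$ so that the weight $(1+|x|^2)^{-k/2}$ belongs to $L^2(\mathbb{R}^d)$; one needs $k$ even so that $(1+|x|^2)^{k/2}$ is a polynomial and the weighted $L^2$ norm on the physical side transforms into a sum of integer-order $L^2$ Sobolev norms on the frequency side via Plancherel; and one needs the vanishing of $f$ at $0$ to order exactly $k$ in order to absorb the $\eps^{-|\beta|}$ blow-up produced when Leibniz distributes derivatives onto the localizer. Once these three constraints are matched, the remaining bookkeeping via Taylor's theorem and the volume of $B(0,\eps)$ is routine and actually produces the sharper estimate $C(\eta)\eps^{1+d/2}\Xi$.
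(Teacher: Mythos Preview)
Your proof is correct and follows essentially the same route as the paper: bound $\|\mathscr{F}^{-1}(\eta_\eps f)\|_{L^1}$ by a Sobolev norm of $\eta_\eps f$ (the paper cites this as a known fact from Stein--Weiss, you write it out via Cauchy--Schwarz with weight $(1+|x|^2)^{-k/2}$ and Plancherel, which is exactly how that fact is proved), then apply Leibniz and Taylor's formula. Your version is in fact more careful than the paper's terse sketch in tracking the competing powers of $\eps$ coming from $D^\beta\eta_\eps$, from the Taylor remainder for $D^\gamma f$, and from the volume of the support; the paper leaves this bookkeeping implicit.
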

\begin{proof} We recall that for such $k$ the left hand side is bounded up to a constant by $\|\eta f\|_{W^1_k}$ (cf. \cite{MR0304972}). By the Leibnitz Formula, it is sufficient to prove that all derivatives $D^{\alpha} f$ are dominated by $\sup_{|\bm{x}|\leq 1} \left(\sum_{|\bm{\alpha}|=k+1} |D^{\bm{\alpha}} f (\bm{x}|\right)$ for $|\alpha|\leq k$.
This is a consequence of Taylor's Formula.
\end{proof}

\begin{lem}\label{jakpozbycsieulamkow2}
Let  $0<\eps\leq r\leq 1$ and $f\in C^{k+1}(\bm{B}(0,r))$ then following inequality holds
\begin{equation}\label{nierownoscjakpozbycsieulamkow}
\|\mathscr{F}^{-1}(\eta_{\eps}f)\| \leq C(\eta) \left( |f(0)|+ \eps \sup_{|\bm{x}|\leq 1} \left( \sum_{|\bm{\alpha}|\leq k+1} |D^{\bm{\alpha}} f (\bm{x})|\right)\right)
\end{equation}
\end{lem}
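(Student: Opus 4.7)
The plan is to reduce to Lemma \ref{jakpozbycsieulamkow} by subtracting the Taylor polynomial of $f$ at the origin, and to handle the polynomial part by rescaling.

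First I would write $f=P+R$, where
\[
P(\bm{x})=\sum_{|\bm{\alpha}|\leq k}\frac{D^{\bm{\alpha}}f(0)}{\bm{\alpha}!}\bm{x}^{\bm{\alpha}}
\]
is the Taylor polynomial of $f$ of degree $k$ at $0$, and $R=f-P$. By construction all derivatives of $R$ of order $\leq k$ vanish at $0$, while $D^{\bm{\alpha}}R=D^{\bm{\alpha}}f$ for $|\bm{\alpha}|=k+1$. Lemma \ref{jakpozbycsieulamkow} applied to $R$ then yields
\[
\|\mathscr{F}^{-1}(\eta_{\eps}R)\|_{1}\leq C(\eta)\,\eps\sup_{|\bm{x}|\leq 1}\sum_{|\bm{\alpha}|=k+1}|D^{\bm{\alpha}}f(\bm{x})|,
\]
which is already of the form appearing on the right-hand side of \eqref{nierownoscjakpozbycsieulamkow}.

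Next I would estimate $\|\mathscr{F}^{-1}(\eta_{\eps}P)\|_{1}$ using the dilation invariance of the $L^{1}$-norm of $\mathscr{F}^{-1}$. After substituting $\bm{y}=\bm{x}/\eps$ in $\eta_{\eps}(\bm{x})P(\bm{x})$, one obtains
\[
\|\mathscr{F}^{-1}(\eta_{\eps}P)\|_{1}=\left\|\mathscr{F}^{-1}\!\left(\eta(\bm{y})\sum_{|\bm{\alpha}|\leq k}\frac{D^{\bm{\alpha}}f(0)}{\bm{\alpha}!}\eps^{|\bm{\alpha}|}\bm{y}^{\bm{\alpha}}\right)\right\|_{1}.
\]
Each function $\eta(\bm{y})\bm{y}^{\bm{\alpha}}$ is a Schwartz function, so $\|\mathscr{F}^{-1}(\eta(\cdot)(\cdot)^{\bm{\alpha}})\|_{1}\leq C(\eta)$, and by the triangle inequality
\[
\|\mathscr{F}^{-1}(\eta_{\eps}P)\|_{1}\leq C(\eta)\sum_{|\bm{\alpha}|\leq k}\eps^{|\bm{\alpha}|}|D^{\bm{\alpha}}f(0)|.
\]
Because $\eps\leq 1$, the term with $\bm{\alpha}=0$ contributes $C(\eta)|f(0)|$, and every other term satisfies $\eps^{|\bm{\alpha}|}\leq\eps$; bounding $|D^{\bm{\alpha}}f(0)|$ by $\sup_{|\bm{x}|\leq 1}|D^{\bm{\alpha}}f(\bm{x})|$ absorbs the remaining contribution into the desired expression.

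Adding the two estimates gives the required bound. The only mildly delicate point is the rescaling for $P$: one must verify that the $L^{1}$ norm of $\mathscr{F}^{-1}$ is invariant under dilation of the argument by a positive scalar, which is standard and justifies reducing the problem on scale $\eps$ to scale $1$. No essential obstacle arises, because Lemma \ref{jakpozbycsieulamkow} already absorbs the genuinely hard (non-polynomial) part of $f$.
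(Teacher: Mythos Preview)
Your proof is correct and follows essentially the same route as the paper: decompose $f$ into its Taylor polynomial $P$ of degree $k$ at $0$ and the remainder $R$, invoke Lemma \ref{jakpozbycsieulamkow} for $R$, and treat $P$ monomial by monomial via rescaling. The only cosmetic difference is that the paper handles each monomial $(2i\pi\bm{\xi})^{\bm{\alpha}}$ by writing $\mathscr{F}^{-1}(\eta_{\eps}(2i\pi\bm{\xi})^{\bm{\alpha}})=D^{\bm{\alpha}}\mathscr{F}^{-1}(\eta_{\eps})$ and then rescaling, whereas you rescale first and then observe that each $\eta(\cdot)(\cdot)^{\bm{\alpha}}$ is Schwartz; these are the same computation.
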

\begin{proof}
Writing $f$ as the sum of a polynomial of degree $k$ and a function satisfying the assumptions of the previous lemma, we see that it is sufficient to consider only polynomials, and, by linearity monomials. For $f(\xi) = (2i\pi\bm{\xi})^{\bm{\alpha}}$, we have 
\begin{equation}
 \|\mathscr{F}^{-1}(\eta_{\eps} f )(\bm{x})\|_{L_1}=\|\eps^{d+\alpha} D^{\alpha} \eta(x)\|_{L_1}\leq C \eps^{\alpha}
\end{equation}
Hence inequality \eqref{nierownoscjakpozbycsieulamkow} follows.
\end{proof}

Now we can prove the Lemma \ref{szacowanko}.
\begin{proof}{Lemma \ref{szacowanko}}.\\
By the definition of $H^{\theta}$ we see that its support is contained in the union of disjoint balls of radius $r$ centered in points from $\Lambda_s$. Radius $r$ depends only on the parameter $\theta$, so we can choose it as small as we wish. Let $\eta_{\bm{q}}\in C^{\infty}$ be rescaled and translated copies of the same function $\eta$ with $\operatorname{supp}\eta_{\bm{q}}\subset B(\bm{q},2r)$ and $\eta_{\bm{q}}(\bm{\xi})=1$ ,$\bm{xi}\in B(\bm{q},r)$ for every $q\in\Lambda_s$. The following identity holds
\begin{equation}\label{Hsrozbicie}
\frac{\xi_2}{\xi_1}H^{\theta}(\bm{\xi})=\sum_{\bm{\phi}\in\Lambda_s}\eta_{\bm{q}}(\bm{\xi})\frac{\xi_2}{\xi_1}H^{\theta}(\bm{\xi}).
\end{equation}
By the condition G (page \pageref{indukcjaIIa}) the function $f= \frac{\xi_2}{\xi_1}$ satisfies conditions of the Lemma \ref{jakpozbycsieulamkow2}. Hence for $r$ small enough by the triangle inequality, \eqref{nierownoscjakpozbycsieulamkow} and \eqref{Hsrozbicie} 
\begin{equation*}
\begin{split}
\|\mathscr{F}^{-1}(\eta_{\bm{q}}f H^{\theta})\|_{L_1(\mathbb{R}^2)} &\leq C(\eta) \sum_{\bm{q}\in\Lambda_s} \left( |f(\bm{q})|+ \eps \sup_{|\bm{x}-\bm{q}|\leq 1} \left( \sum_{|\alpha|\leq k+1} |D^\alpha f (\bm{x})|\right)\right)\\&\quad\cdot\|\mathscr{F}^{-1}(H^{\theta})\|_{L_1(\mathbb{R}^2)}.
\end{split}
\end{equation*}
By conditions E and H,
\begin{equation*}
\left|\frac{q_2}{q_1}\right|=\left|\frac{c^k_2+\sum_{j=1}^{k-1} \zeta_j c^j_2}{c^k_1+\sum_{j=1}^{k-1} \zeta_j c^j_1}\right|\leq \frac{k |c^k|}{|c^k_1|-\sum_{j=1}^{k-1} |c^j_1|}\leq \frac{s}{3}\left|\frac{c^k_1}{c^k_2}\right|\leq \frac{1}{2\cdot 3^s}. 
\end{equation*} 
Since $|\Lambda_s|\leq 3^s$ we can choose $\eps>0$ such that 
\begin{equation*}
\begin{split}
\|\mathscr{F}^{-1}(\frac{\xi_2}{\xi_1}&H^{\theta})\|_{L_1(\mathbb{R}^2)} \leq C \|\mathscr{F}^{-1}(H^{\theta})\|_{L_1(\mathbb{R}^2)},
\end{split}
\end{equation*}
where the constant $C$ does not depend on $s$.
\end{proof}

\nocite{*}
\bibliographystyle{cont}
\bibliography{cont}
\end{document}